\documentclass{article}
\usepackage{amsmath, amsfonts, amsthm, amssymb}
\usepackage{graphicx}
\usepackage{float}
\usepackage{verbatim}

\hoffset=-1.5cm\voffset=-3.0cm
\setlength{\textwidth}{15cm}
\setlength{\textheight}{24cm}

\setcounter{secnumdepth}{3}
\numberwithin{equation}{section}
\setlength\parindent{0pt}

\sloppy

\allowdisplaybreaks

\newtheorem{thm}{Theorem}[section]
\newtheorem{lma}[thm]{Lemma}
\newtheorem{cor}[thm]{Corollary}
\newtheorem{defn}[thm]{Definition}

\newtheorem{prop}[thm]{Proposition}
\newtheorem{conj}[thm]{Conjecture}

\renewcommand{\geq}{\geqslant}
\renewcommand{\leq}{\leqslant}
\renewcommand{\H}{\text{H}}

\title{Micromeasure distributions and applications for \\ conformally generated fractals}

\author{Jonathan M. Fraser\\
\emph{School of Mathematics, The University of Manchester,}\\ \emph{Manchester, M13 9PL, UK}\\
\emph{Email: jonathan.fraser@manchester.ac.uk} \\ \\
 Mark Pollicott\\
\emph{Mathematics Institute, Zeeman Building,}\\ \emph{University of Warwick, Coventry, CV4 7AL, UK}\\
\emph{Email: mpollic@maths.warwick.ac.uk}}

\begin{document}
\maketitle

\begin{abstract}
We study the scaling scenery of Gibbs measures for subshifts of finite type on self-conformal fractals and applications to Falconer's distance set problem and dimensions of projections.  Our analysis includes hyperbolic Julia sets, limit sets of Schottky groups and graph-directed self-similar sets.
\\

\emph{Mathematics Subject Classification} 2010: 37C45, 28A80, 28A33, 30F40, 37F50.

\emph{Key words and phrases}: micromeasure, self-conformal set, Gibbs measure, distance set conjecture, projections.
\end{abstract}

\section{Introduction}

This article concerns Gibbs measures supported on subshifts of finite type and corresponding classes of fractals defined via iterated function systems consisting of conformal maps, for example hyperbolic Julia sets, limit sets of Schottky groups and graph-directed self-similar sets.  Specifically, we are interested in understanding the scaling scenery for these measures, which can be described by Furstenberg's notion of CP-chains, and in applications in the geometric setting to the dimension theory of projections and distance sets. As such we build on recent and significant developments in the area due to Hochman and Shmerkin \cite{HochmanShmerkin}.  Studying the process of zooming in on a fractal set or measure is very much in vogue at the moment and is proving useful in many contexts.  We note that this kind of problem has been considered for certain conformally generated fractals before.  In particular, we mention the papers \cite{bedfordfisher1, bedfordfisher2, patz}, which share some of the spirit of this article.  These papers were largely concerned with a detailed analysis of the scaling scenery, whereas we place more emphasis on geometric applications.  Our applications include: resolution of Falconer's distance set problem for Julia sets with hyperbolic dimension strictly greater than one and an extension of Hochman and Shmerkin's optimal projection theorem for self-similar sets to the graph-directed setting where the action induced by the defining mappings on the Grassmannian manifold need not be a group action.

\subsection{Micromeasures and CP-distributions}

Furstenberg \cite{Furstenberg} introduced the notion of a \emph{CP-chain}  (\emph{conditional probability chain}) to capture the dynamics of the process of zooming in on a fractal measure, although many of the ideas are already present in his 1970 article \cite{Furstenberg60s}.  We will not use  CP-chains directly and so refer the reader to the papers \cite{Furstenberg, Hochman, HochmanShmerkin, kaenmaki} for a more in-depth account.  However, we will rely heavily on the theory of CP-chains developed by Hochman and Shmerkin \cite{HochmanShmerkin} and so will recall various results from that work as we go.  Write $\mathcal{P}(K)$ for the space of Borel probability measures supported on a compact metric space $K$ and $\text{supp} (\mu) \subseteq K$ for the support of $\mu \in \mathcal{P}(K)$. A \textit{distribution} is a member of $\mathcal{P}(\mathcal{P}(K))$ where $\mathcal{P}(K)$ has been metrized in a way compatible with the topology of weak-$*$ convergence, for example with the Levy-Prokhorov or Wasserstein metric.
\\ \\
Let $b \in \mathbb{N}$ with $b\geq 2$ and let $\mathcal{E}_b$ be the collection of all half open $b$-adic boxes contained in $[0,1)^d$ oriented with the coordinate axes which are the product of half open $b$-adic intervals of the same generation. If $x \in [0,1)^d$, write $\Delta_b^n(x)$ for the unique $n$th generation box in $\mathcal{E}_b$ containing $x$, i.e. $\Delta_b^n(x)$ is a product of $d$ half open intervals of length $b^{-n}$. For $B \in \mathcal{E}_b$, let $T_B : \mathbb{R}^d \to \mathbb{R}^d$ be the unique rotation and reflection free similarity that maps $B$ onto $[0,1)^d$.  If $\mu \in \mathcal{P}([0,1]^d) $ and $\mu(B) > 0$, write
\[
\mu^B \ = \ \frac{1}{\mu(B)} \mu|_B \circ T_B^{-1} \ \in \  \mathcal{P}([0,1]^d).
\]
In practise what one is often interested in are the ($b$-adic) minimeasures $\mu^{\Delta^k_b(x)}$ at a generic point $x$ in the support of $\mu$ and the weak limits of such measures which are called ($b$-adic) micromeasures (at $x$).  We denote the set of all minimeasures of $\mu$ by $\text{Mini}(\mu)$ and the set of all micromeasures of $\mu$ by $\text{Micro}(\mu)$.  In general, (the measure component of) a CP-chain is a special type of distribution $Q \in \mathcal{P}(\mathcal{P}([0,1]^d))$ and of central importance is the notion of a measure $\mu$ \emph{generating} an (ergodic) CP-chain, see \cite[Section 7]{HochmanShmerkin}. Indeed a lot of the subsequent applications will apply to `measures which generate ergodic CP-chains'.  The definitions involved are fairly technical but can often be sidelined in practise due to the following trick of Hochman-Shmerkin.  Theorem 7.10 in \cite{HochmanShmerkin} implies that for any $\mu \in \mathcal{P}([0,1]^d)$, there exists an ergodic CP-chain whose measure component $Q$ is supported on $\text{Micro}(\mu)$ and has dimension at least $\dim_\H \mu$.  The `dimension' of a CP-chain is the average of the dimensions of micromeasures with the respect to the measure component of the chain,  i.e.
\[
\int \dim_{\H} \nu \, d Q(\nu),
\]
but for an ergodic CP-chain the micromeasures are almost surely exact dimensional with a common `exact dimension', \cite[Lemma 7.9]{HochmanShmerkin}. Theorem 7.7 in \cite{HochmanShmerkin} tells us that $Q$-almost all $\nu \in \text{Micro}(\mu)$ generate this CP-chain.  This means that if $\mu$ is sufficiently regular that all of its micromeasures are `geometrically similar' to $\mu$ itself, then applying the machinery of CP-chains to the micromeasures is sufficient to obtain geometric results concerning $\mu$.  This is a central theme of this paper.

\subsection{Applications to projections and distance sets}

Relating the dimension and measure of orthogonal projections of subsets of Euclidean space to the dimension and measure of the original set is a classical problem in geometric measure theory, see the recent survey \cite{FalconerFraserJin}.  Throughout this article we will be concerned with the Hausdorff dimension $\dim_\H$ of sets and the (lower) Hausdorff dimension of measures, defined by $\dim_\H \mu = \inf\{ \dim_\H E : \mu(E)>0\}$.  In particular, the Hausdorff dimension of a measure is at most the Hausdorff dimension of its support.  We refer the reader to the books \cite{Falconer, Mattila} for more details on the dimension theory of sets and measures.  The seminal results of Marstrand, Kaufman and Mattila have established that the dimension is `almost surely what it should be' in the following sense, see \cite[Chapter 9]{Mattila}.

\begin{thm}[Marstrand-Kaufman-Mattila]
Let $K \subset \mathbb{R}^d$ be compact and let $k \in \{1, \dots, d-1\}$.  Then for almost all orthogonal projections $\pi \in \Pi_{d,k}$, we have
\[
\dim_{\text{\emph{H}}} \pi K \ = \ \min \{ k,  \dim_{\text{\emph{H}}} K \},
\]
where $\Pi_{d,k}$ is the Grassmannian manifold consisting of all orthogonal projections from $\mathbb{R}^d$ to $\mathbb{R}^k$ equipped with the natural measure.
\end{thm}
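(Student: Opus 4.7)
The plan is to combine Frostman's lemma with an energy averaging argument over the Grassmannian, which is the standard potential-theoretic route. The upper bound $\dim_\H \pi K \le \min\{k, \dim_\H K\}$ is immediate: every $\pi \in \Pi_{d,k}$ is $1$-Lipschitz, so $\dim_\H \pi K \le \dim_\H K$, and trivially $\dim_\H \pi K \le k$ since $\pi K \subseteq \mathbb{R}^k$. Hence the content is the lower bound, which I would prove via the energy method.

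Fix $s < \min\{k, \dim_\H K\}$. By Frostman's lemma applied to $K$, there exists $\mu \in \mathcal{P}(K)$ with finite $s$-energy
\[
I_s(\mu) \ = \ \int\!\!\int |x-y|^{-s}\, d\mu(x)\, d\mu(y) \ < \ \infty.
\]
For each $\pi \in \Pi_{d,k}$, consider the push-forward $\pi_*\mu \in \mathcal{P}(\mathbb{R}^k)$. By Fubini's theorem and the translation invariance of the integrand,
\[
\int_{\Pi_{d,k}} I_s(\pi_*\mu)\, d\gamma(\pi) \ = \ \int\!\!\int \left( \int_{\Pi_{d,k}} |\pi(x-y)|^{-s}\, d\gamma(\pi) \right) d\mu(x)\, d\mu(y),
\]
where $\gamma$ denotes the natural measure on $\Pi_{d,k}$. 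The core geometric estimate is the Grassmannian integral bound
\[
\int_{\Pi_{d,k}} |\pi v|^{-s}\, d\gamma(\pi) \ \le \ C_{s,d,k} \, |v|^{-s} \qquad (v \in \mathbb{R}^d \setminus\{0\}),
\]
which holds precisely because $s < k$; it is proved by exploiting $O(d)$-invariance of $\gamma$ to reduce to a fixed unit vector and then computing in local coordinates on $\Pi_{d,k}$, where the integrability condition $s < k$ appears as the condition that the $k$-dimensional Jacobian factor dominates the singularity at the origin.

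Combining the two displays yields $\int I_s(\pi_*\mu)\, d\gamma(\pi) \le C_{s,d,k}\, I_s(\mu) < \infty$, so $I_s(\pi_*\mu) < \infty$ for $\gamma$-almost every $\pi$. The standard converse to Frostman's lemma then gives $\dim_\H \pi K \ge \dim_\H \mathrm{supp}(\pi_*\mu) \ge s$ for $\gamma$-almost every $\pi$. To conclude, I would pick a sequence $s_n \uparrow \min\{k, \dim_\H K\}$, apply the above to each $s_n$, and intersect the countably many full-measure sets to obtain a single full-measure set of projections on which $\dim_\H \pi K \ge \min\{k, \dim_\H K\}$; combined with the trivial upper bound this proves the theorem.

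The main obstacle is the Grassmannian integral estimate: the bound is sharp at $s=k$, and one must use invariance of $\gamma$ under $O(d)$ together with a careful choice of coordinates on $\Pi_{d,k}$ (for instance, parameterising a neighbourhood of a fixed projection by the graphs of linear maps between complementary subspaces) to carry out the computation. Everything else is routine once this inequality is in hand.
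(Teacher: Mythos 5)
Your proposal is correct: the trivial Lipschitz upper bound plus the Frostman/energy averaging argument, with the Grassmannian estimate $\int_{\Pi_{d,k}} |\pi v|^{-s}\, d\gamma(\pi) \le C_{s,d,k}|v|^{-s}$ for $s<k$, is exactly the classical Kaufman--Mattila potential-theoretic proof. The paper itself gives no proof, citing this as a background result (Mattila, Chapter 9), and your argument is essentially the same as the one in that reference.
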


We note that there are analogues of this theorem for projections of \emph{measures}, see \cite[Section 10]{FalconerFraserJin}.  Recently many people have been concerned with strengthening the above result in specific settings with the philosophy that the only exceptions should be the evident ones.  One of the major advances on this front was due to applications of the CP-chain machinery by Hochman and Shmerkin \cite{HochmanShmerkin}.
\begin{thm}[See Theorem 8.2 of \cite{HochmanShmerkin}] \label{EEE}
Suppose $\mu \in \mathcal{P}([0,1]^d)$ generates an ergodic CP-chain and let $k \in \mathbb{N}$ and $\varepsilon>0$.  Then there exists an open dense set $\mathcal{U}_\varepsilon \subset \Pi_{d,k}$ (which is also of full measure) such that for all $\pi \in \mathcal{U}_\varepsilon$
\[
\dim_{\text{\emph{H}}} \pi \mu > \min\{k, \dim_{\text{\emph{H}}}\mu\} - \varepsilon.
\]
\end{thm}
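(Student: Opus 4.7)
The plan is to isolate what the ergodic CP-chain structure of $\mu$ adds to the classical Marstrand-Kaufman-Mattila theorem, which already handles the full-measure part. Set $s = \min\{k, \dim_{\text{H}} \mu\}$ and denote the exceptional set by
\[
\mathcal{B}_\varepsilon \ = \ \bigl\{\pi \in \Pi_{d,k} \, : \, \dim_{\text{H}} \pi\mu \le s - \varepsilon\bigr\}.
\]
The task is to show $\mathcal{B}_\varepsilon$ is closed and nowhere dense; its Lebesgue-negligibility comes free from the Marstrand theorem for measures applied to $\mu$.

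For closedness, my approach would be to replace Hausdorff dimension by a finite-scale entropy proxy. Because $\mu$ generates an ergodic CP-chain it is exact dimensional, and $\dim_{\text{H}} \pi\mu$ can be recovered as a limit of normalized $b$-adic entropies $H(\pi\mu, \mathcal{E}_b^n) / (n \log b)$ (with the usual care about partition boundaries; one can blur $\pi\mu$ at scale $b^{-n}$ to avoid this). For each fixed $n$ the map $\pi \mapsto H(\pi\mu, \mathcal{E}_b^n)$ is continuous, so the condition ``at some scale $n$, the normalised entropy of $\pi\mu$ exceeds $s - \varepsilon/2$'' is open in $\pi$ and cuts out an open set $\mathcal{U}_\varepsilon$ whose complement lies in $\mathcal{B}_\varepsilon$.

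For density, I would invoke the CP-chain projection theorem of Hochman-Shmerkin at the level of the ergodic CP-chain $Q$ generated by $\mu$: for any non-empty open $V \subset \Pi_{d,k}$ there exists $\pi_0 \in V$ that is a Marstrand-good direction for Lebesgue-a.e. micromeasure, so that $Q$-almost every $\nu \in \text{Micro}(\mu)$ satisfies $\dim_{\text{H}} \pi_0\nu = \min\{k, \dim Q\} = s$. Via the entropy upgrade, for such $\nu$ there is a finite scale $n_0$ at which $\pi_0 \nu$ has normalised entropy at least $s - \varepsilon/4$. Now I exploit that $\mu$ generates $Q$: for $\mu$-typical $x$ the minimeasures $\mu^{\Delta_b^n(x)}$ equidistribute according to $Q$ in weak-$\ast$, so one can pick a minimeasure of $\mu$ close to such a $\nu$. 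Joint continuity of finite-scale entropy in the measure and in the direction then transports the scale-$n_0$ entropy estimate back to $\mu$ itself at a direction close to $\pi_0$, which produces a point of $V \cap \mathcal{U}_\varepsilon$ and proves density.

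The main obstacle will be the transfer step, since passing a finite-scale entropy bound from a micromeasure to $\mu$ is not automatic: one needs the minimeasure-to-micromeasure approximation to occur at a scale much finer than $n_0$, and one must control the boundary effect caused by projecting a $b$-adic partition. These issues are addressed by choosing $\pi_0$ in the dense $G_\delta$ of ``well-placed'' directions, along which no mass of $\pi_0 \mu$ sits on partition boundaries, and by using an approximate form of the identity $\mu^{\Delta_b^n(x)}$ $\approx \nu$ which is quantitatively strong enough to preserve scale-$n_0$ entropy to within $\varepsilon/4$. The remaining steps — taking the union over a countable collection of ``good'' $\pi_0$'s filling $\Pi_{d,k}$, and checking ergodic convergence of the sceneries — are essentially bookkeeping on the CP-chain machinery.
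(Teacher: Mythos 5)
You should first be aware that the paper contains no proof of this statement: it is quoted directly from Hochman--Shmerkin (their Theorem 8.2), so the only meaningful comparison is with their argument. Against that benchmark your sketch has a genuine gap at the ``closedness'' step, and it propagates into the transfer step. You define $\mathcal{U}_\varepsilon$ by the condition that at \emph{some single scale} $n$ the normalised entropy of $\pi\mu$ exceeds $s-\varepsilon/2$. That set is indeed open in $\pi$, but membership in it gives no lower bound on $\dim_\H \pi\mu$: a measure can have nearly maximal entropy at one finite scale and still be purely atomic, hence of dimension $0$, so the implication ``$\pi\in\mathcal{U}_\varepsilon \Rightarrow \dim_\H\pi\mu> s-\varepsilon$'' --- which is exactly what the theorem asserts on the open dense set --- does not follow. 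What you actually argue is the reverse inclusion $\mathcal{B}_\varepsilon^{c}\subseteq\mathcal{U}_\varepsilon$ (and even that has the epsilons pointing the wrong way, since $\dim_\H\pi\mu\leq s-\varepsilon/2$ does not place $\pi$ in $\mathcal{B}_\varepsilon$), whereas the theorem is precisely the claim that some open dense set is contained \emph{in} $\mathcal{B}_\varepsilon^{c}$. For the same reason the density argument falls short: transporting a scale-$n_0$ entropy estimate from a micromeasure back to $\mu$ produces a point of $\mathcal{U}_\varepsilon$, not a direction at which the Hausdorff dimension bound holds. Note also that your opening claim, that $\dim_\H\pi\mu$ is recovered as a limit of normalised $b$-adic entropies because $\mu$ is exact dimensional, is essentially circular: exact dimensionality of the \emph{projections} is part of what Hochman--Shmerkin prove in this very theorem, not something inherited from exact dimensionality of $\mu$.

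The missing ingredient is the mechanism that converts entropy information into a lower bound on $\dim_\H\pi\mu$ valid for \emph{every} $\pi$, namely the local entropy averages lemma applied along the scenery. Hochman--Shmerkin introduce the function $E(\pi)$, an average over the chain $Q$ of asymptotic normalised entropies of projected micromeasures (a limit of quantities of the form $\int \tfrac{1}{N}H(\pi\nu,\mathcal{E}_b^{N})\,dQ(\nu)$). Because $\mu$ generates the ergodic chain, the averages over \emph{all} scales of conditional entropies of the projected sceneries at $\mu$-typical points converge to $E(\pi)$, and the local entropy averages lemma turns this into $\dim_\H\pi\mu\geq E(\pi)$ for all $\pi$ simultaneously. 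Lower semicontinuity of $\pi\mapsto E(\pi)$ (a supremum of functions continuous in $\pi$, after the usual smoothing at partition boundaries) makes $\{\pi: E(\pi)>\min\{k,\dim_\H\mu\}-\varepsilon\}$ open, and Marstrand--Kaufman--Mattila applied to $Q$-typical micromeasures together with Fubini gives $E(\pi)=\min\{k,\dim_\H\mu\}$ for almost every $\pi$, so this set is also of full measure and dense. Your semicontinuity instinct is the right one, but it must be attached to the chain-averaged, all-scales quantity $E(\pi)$ rather than to the entropy of $\pi\mu$ at a single finite scale; without that replacement the argument cannot reach the stated conclusion.
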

A key application of this result is to obtain `all projection' type results in certain situations. Indeed, if one can show that $\dim_\H \pi \mu $ is invariant under some minimal action on $\Pi_{d,k}$, then this forces it to be constantly equal to the value predicted by Marstand-Kaufman-Mattila.  Hochman and Shmerkin also obtain a nonlinear projection theorem, which is a testament to the robustness of the CP-chain approach to projection type problems.  

\begin{thm}[See Proposition 8.4 of \cite{HochmanShmerkin}] \label{C1images}
Suppose $\mu \in \mathcal{P}([0,1]^d)$ generates an ergodic CP-chain. Let $\pi \in \Pi_{d,k}$ and $\varepsilon>0$. Then there exists $\delta>0$ such that for all $C^1$ maps $g : [0,1]^d \to \mathbb{R}^k$ with
$$\sup_{x \in \text{\emph{supp}} (\mu)} \|D_x g - \pi\| < \delta,$$
we have
$$\dim_\text{\emph{H}} g \mu > \dim_\text{\emph{H}} \pi \mu - \varepsilon.$$
\end{thm}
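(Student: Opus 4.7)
The plan is to use the $C^1$-smallness of $g - \pi$ on $\text{supp}(\mu)$ to linearize $g$ at very small scales, identify the rescaled push-forwards with linear images of micromeasures via the CP-chain structure, and then invoke a continuity-in-the-linear-projection statement at $\pi$ to conclude.

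\textbf{Step 1: linear continuity.} First I would show that if $\mu$ generates an ergodic CP-chain $Q$ of dimension $\dim_\H \mu$, then the map $A \mapsto \dim_\H A\mu$ is continuous at $\pi$ on the open set of rank-$k$ linear maps $\mathbb{R}^d \to \mathbb{R}^k$. This is internal to the CP-chain formalism: as in the proof of Theorem~\ref{EEE}, $\dim_\H A\mu$ can be written as an average over $Q$ of projected local entropies of micromeasures, and this integrand is jointly continuous in the linear map away from degeneracies. Choose $\delta_0 > 0$ so that $\|A - \pi\| < 2\delta_0$ with $A$ of rank $k$ implies $\dim_\H A\mu > \dim_\H \pi\mu - \varepsilon/2$, and set $\delta = \delta_0$.

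\textbf{Step 2: linearization and CP-scaling.} Assume $\sup_{x\in\text{supp}(\mu)}\|D_xg - \pi\| < \delta$. By uniform continuity of $Dg$ on the compact set $\text{supp}(\mu)$, there exists $r_0 > 0$ such that $|g(y) - g(x) - D_xg(y-x)| \le \delta_0 |y-x|$ for all $x\in\text{supp}(\mu)$ and $|y-x| < r_0$. Fix $b \ge 2$ and take $n$ with $\text{diam}(\Delta_b^n(x)) < r_0$. For $B = \Delta_b^n(x)$, the rescaled map $\tilde g_{B,x}(z) := b^n(g(T_B^{-1}z) - g(x))$ differs in $C^0$-norm on $[0,1)^d$ from the affine map $z \mapsto D_xg\cdot(z - T_B(x))$ by at most $\delta_0$, and $\tilde g_{B,x}\mu^B$ is the image under a fixed rigid rescaling of the local restriction of $g\mu$ near $g(x)$. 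By Theorem 7.7 of \cite{HochmanShmerkin}, for $\mu$-a.e.\ $x$ the minimeasures $\mu^{\Delta_b^n(x)}$ equidistribute for $Q$, and weak-$*$ limits along subsequences are micromeasures $\nu$ that generate $Q$ and are exact-dimensional with dimension $\dim_\H\mu$ by \cite[Lemma 7.9]{HochmanShmerkin}. Along such a subsequence, $\tilde g_{B,x}\mu^B$ converges to $A\nu$ for a linear $A$ with $\|A-\pi\| \le \|D_xg - \pi\| + \delta_0 < 2\delta_0$, so Step 1 applied to $\nu$ (which generates the same CP-chain) gives $\dim_\H A\nu > \dim_\H \pi\mu - \varepsilon/2$.

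\textbf{Main obstacle.} The hard part is that Hausdorff dimension of measures is not weak-$*$ lower semi-continuous, so the bound on $\dim_\H A\nu$ cannot be transferred to $g\mu$ directly by passing to the limit. I would circumvent this by replacing the weak-$*$ limiting step with a quantitative comparison at finite scale: exact dimensionality of $\nu$ from the CP-chain yields entropy lower bounds for $A\nu$ at small dyadic scales, and the $C^0$-closeness of $\tilde g_{B,x}$ to the linear map transfers these entropy bounds to $\tilde g_{B,x}\mu^B$ at comparable scales, up to an additive error controlled by $\delta_0$. Unwinding the rescaling and combining with exact dimensionality of micromeasures at $\mu$-a.e.\ $x$ converts this into a local dimension lower bound for $g\mu$ at $\mu$-a.e.\ point of $g(\text{supp}(\mu))$, yielding $\dim_\H g\mu > \dim_\H \pi\mu - \varepsilon$ after absorbing the residual errors into the initial choice of $\delta$.
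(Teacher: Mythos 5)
A preliminary remark: this paper does not prove Theorem \ref{C1images} at all; it is quoted from Hochman and Shmerkin (Proposition 8.4 of \cite{HochmanShmerkin}), so your attempt has to be judged against their local entropy averages machinery, whose overall shape (linearise $g$ at fine scales, pass to the scenery, work at finite resolution rather than with Hausdorff dimension of weak-$*$ limits) you have correctly identified. The first genuine gap is Step 1, which is the crux and is not delivered by the formalism you invoke. What the CP-chain machinery behind Theorem \ref{EEE} provides is the one-sided inequality $\dim_\H A\mu \ge E(A)$, where $E(A)$ is the $Q$-average of finite-resolution entropies of the $A$-projections of micromeasures, together with lower semicontinuity of $E$; it does not express $\dim_\H A\mu$ as such an average. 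Indeed, entropy at a fixed resolution is not continuous in the linear map (mass near cell boundaries), so your "jointly continuous integrand" fails, and lower semicontinuity only bounds nearby values from below by $E(\pi)$, not by $\dim_\H \pi\mu$. Unless you additionally prove $\dim_\H \pi\mu \le E(\pi)$ at the given $\pi$ (true for almost every $\pi$, but a nontrivial extra input at a fixed $\pi$), your scheme yields $\dim_\H g\mu \ge E(\pi)-\varepsilon$ rather than the stated bound; and note that the "continuity of $A\mapsto \dim_\H A\mu$ at $\pi$" you assert is precisely the linear special case of the theorem you are trying to prove, so as written Step 1 is essentially circular.

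The second gap is your treatment of the "main obstacle". Extracting a single micromeasure $\nu$ as a weak-$*$ limit and using its exact dimensionality cannot be converted into the desired bound: weak-$*$ closeness transfers entropy only at resolutions coarse relative to the quality of approximation, and, more fundamentally, entropy lower bounds at finitely many scales do not lower-bound local or Hausdorff dimension (Hausdorff dimension of a measure is dominated by its entropy dimension, not the reverse). The mechanism that makes the strategy work is the local entropy averages lemma of \cite{HochmanShmerkin}: at $\mu$-a.e.\ $x$, $\dim_\H g\mu$ is bounded below by the liminf of averages over $n\le N$ of $\tfrac{1}{q}H_q$ of the images of the minimeasures $\mu^{\Delta_b^n(x)}$ under the locally rescaled $g$, and it is the equidistribution of the whole scenery (the generation of the ergodic CP-chain, which you invoke via Theorem 7.7 but then discard by passing to a single subsequential limit) that makes these scale-averages converge to $\int \tfrac{1}{q}H_q(\pi\nu)\,dQ(\nu)$ up to errors of order $\delta$ plus $o_q(1)$. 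Without this averaging-over-scales argument, together with the care needed for non-injectivity of $g$ and $b$-adic boundary effects, your final "unwinding" step is an assertion rather than a proof; repairing the argument along these lines (and replacing Step 1 by lower semicontinuity of $E$) gives the bound with $E(\pi)$, which is in fact all that the applications in this paper use, since there $\pi$ is chosen from the open dense set on which $E$ is nearly maximal.
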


It was a recent innovation of Orponen \cite{Orponen} that the work by Hochman and Shmerkin on $C^1$ images could be adapted to obtain information about the dimension of \emph{distance sets}.  Given a set $K \subset \mathbb{R}^d$, the distance set of $K$ is defined by
\[
D(K) \ = \ \big\{ \lvert x-y\rvert : x,y \in K \big\}.
\]
Of particular interest is \emph{Falconer's distance set conjecture}, originating with the paper \cite{distancesets}, which generally tries to relate the dimension of $D(K)$ with the dimension of $K$.  One version of the conjecture is as follows:
\begin{conj} \label{distanceconj}
Let $K \subseteq \mathbb{R}^d$ be analytic.  If $\dim_{\text{\emph{H}}} K \geq d/2$, then $\dim_{\text{\emph{H}}} D(K) =1$.
\end{conj}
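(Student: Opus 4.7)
Conjecture \ref{distanceconj} is a famous open problem in its stated generality, so any honest plan must identify a clear obstruction; the natural strategy suggested by the CP-chain machinery developed above is the following. First I would pass from the set to a measure: by Frostman's lemma there exists $\mu \in \mathcal{P}(K)$ with $\dim_{\text{H}} \mu$ arbitrarily close to $\dim_{\text{H}} K \ge d/2$. Second, I would replace $D(K)$ by a pinned distance set $D_y(K) = \{|x - y| : x \in K\}$ for a well-chosen $y \in K$, since $D_y(K) \subseteq D(K) \subseteq \mathbb{R}$ and the upper bound $\dim_{\text{H}} D(K) \le 1$ is trivial. It then suffices to show that $\dim_{\text{H}} \phi_y \mu = 1$ for some $y$, where $\phi_y(x) = |x - y|$.

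The key geometric observation is that $\phi_y$ is $C^1$ on $\mathbb{R}^d \setminus \{y\}$ with derivative $D_x \phi_y(v) = \langle v,\, (x - y)/|x - y|\rangle$, which is exactly the orthogonal projection onto the one-dimensional subspace spanned by $(x-y)/|x-y|$, an element of $\Pi_{d,1}$. On any small patch of $K$ not containing $y$, this derivative is uniformly close to a fixed projection $\pi_{x_0, y}$ associated with a base point $x_0$. Theorem \ref{C1images} would then let one estimate $\dim_{\text{H}} \phi_y \mu$ from below by $\dim_{\text{H}} \pi_{x_0, y} \mu - \varepsilon$, provided $\mu$ (or a micromeasure of it) generates an ergodic CP-chain. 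Combining this with Theorem \ref{EEE}, which controls the dimension of linear projections of $\mu$ in an open dense set of directions, I would try to show that the `good' pin points form a nonempty set: the directions $(x-y)/|x-y|$ traced out as $x$ varies over $K$ should intersect the good open dense set for some $y \in K$, and then letting $\varepsilon \to 0$ would finish.

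The decisive obstacle is the very first step. For a \emph{general} analytic set $K$ there is no reason why any Borel measure on $K$ should generate an ergodic CP-chain; the Hochman--Shmerkin micromeasure trick following Theorem 7.10 of \cite{HochmanShmerkin} produces such a CP-chain only on $\text{Micro}(\mu)$, and without a mechanism to transfer CP-chain conclusions from micromeasures back to $\mu$ itself, neither Theorem \ref{EEE} nor Theorem \ref{C1images} applies to $\phi_y \mu$. Supplying that mechanism is precisely why the present paper restricts to conformally generated fractals: there, Gibbs measures for suitable potentials on the associated subshift of finite type yield micromeasures that are `geometrically similar' to $\mu$, closing the loop and turning the sketch above into a genuine argument, at least when the relevant dimension strictly exceeds $d/2$ (as in the abstract's statement on Julia sets with hyperbolic dimension $>1$).
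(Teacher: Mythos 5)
You are right not to offer a proof here: the statement is Falconer's distance set conjecture, which the paper states explicitly as an open conjecture and does not prove, so there is no proof of it in the paper to compare against. Your outline — pinned distance functions $\phi_y$ as $C^1$ perturbations of orthogonal projections, estimated via Theorems \ref{EEE} and \ref{C1images}, with the decisive obstruction that a general analytic set carries no measure known to generate an ergodic CP-chain — is exactly the mechanism the paper deploys for its partial results (Theorems \ref{FFS+dist} and \ref{distancesetsconformal}), where the Gibbs/conformal structure supplies the missing transfer from micromeasures back to the original measure through Theorems \ref{symbtogeom} and \ref{symbtogeomCP} (together with Orponen's direction-set dichotomy and the Besicovitch--Miller result in the rectifiable case, which your sketch omits but which only matter for the special cases actually proved).
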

There have been numerous partial results in a variety of directions but the full conjecture still remains a major open problem in geometric measure theory, see for example \cite{erdogan,bourgain,Orponen} and the references therein.  Orponen \cite{Orponen} considered the distance set problem for self-similar sets, but a more general result was proved by Ferguson, Fraser and Sahlsten building on the idea of Orponen, which we now state.

\begin{thm}[Theorem 1.7 of \cite{FergusonFraserSahlsten}] \label{FFSdist}
Let $\mu$ be a measure on $\mathbb{R}^2$ which generates an ergodic CP-chain and satisfies $\mathcal{H}^1\big(\text{\emph{supp}}(\mu)\big) >0$. Then
\[
\dim_{\text{\emph{H}}} D\big(\text{\emph{supp}}(\mu) \big) \geq \min \{ 1, \ \dim_{\text{\emph{H}}} \mu \}.
\]
\end{thm}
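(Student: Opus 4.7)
The plan is to realise a suitable subset of the distance set as the $C^1$-image of a micromeasure of $\mu$, reducing the problem to a projection statement. Pick two distinct points $x_0, y_0 \in \text{supp}(\mu)$ and set $\theta_0 = (y_0 - x_0)/|y_0 - x_0| \in S^1$, $L = |y_0 - x_0|$. A first-order expansion shows that for $y \in B(y_0, r)$ with $r \ll L$,
\[
|y - x_0| \ = \ L + r\, \pi_{\theta_0}\!\bigl((y-y_0)/r\bigr) + O(r^2/L),
\]
so the distance function $y \mapsto |y-x_0|$, recentred at $L$ and rescaled by $1/r$, converges in $C^1(B(0,1))$ to the orthogonal projection $\pi_{\theta_0}$ as $r \to 0$. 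Since $D(\text{supp}(\mu))$ contains the pinned distance set $\{|y - x_0| : y \in \text{supp}(\mu)\}$, it suffices to obtain a dimension lower bound for an appropriate rescaled restriction of this pinned image.

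First I would choose $x_0, y_0$ so that $\theta_0$ lies in the open dense set $\mathcal{U}_\varepsilon \subset \Pi_{2,1}$ of Theorem~\ref{EEE}, which is the set of directions along which projections of measures generating the same ergodic CP-chain as $\mu$ have near-maximal dimension. This is where the hypothesis $\mathcal{H}^1(\text{supp}(\mu)) > 0$ enters. If $\text{supp}(\mu)$ is contained in a line, one argues directly in one dimension (applying Theorem~\ref{C1images} with $\pi$ the projection onto the line direction); otherwise, a Marstrand-type pinned projection argument applied to $\text{supp}(\mu)$ shows that the set of realised directions
\[
\Theta(\mu) \ = \ \bigl\{(y-x)/|y-x| : x,y \in \text{supp}(\mu),\ x \neq y\bigr\} \ \subset \ S^1
\]
has positive one-dimensional Lebesgue measure in $S^1$, and hence meets every open dense subset. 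Pick $x_0, y_0$ realising a direction $\theta_0 \in \mathcal{U}_\varepsilon$, while simultaneously arranging $y_0$ to lie in the full $\mu$-measure set at which $\mu$ generates the CP-chain in the Hochman--Shmerkin sense.

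Next, invoke the Hochman--Shmerkin framework from the excerpt to extract scales $r_j \to 0$ along which the rescaled restrictions $\tilde\mu_{r_j}$ of $\mu|_{B(y_0,r_j)}$ (translated to the origin and dilated by $r_j^{-1}$) converge weak-$*$ to a micromeasure $\nu$ that itself generates the ergodic CP-chain and satisfies $\dim_\H \nu \geq \dim_\H \mu$. Writing $\tilde g_r(u) = (|y_0 + r u - x_0| - L)/r$, the expansion above yields $\|D \tilde g_r - \pi_{\theta_0}\|_{C^0(B(0,1))} = O(r/L)$. Fix $\varepsilon > 0$, let $\delta > 0$ be the threshold furnished by Theorem~\ref{C1images} for $\nu$ with $\pi = \pi_{\theta_0}$, and take $r_j$ small enough that $\|D \tilde g_{r_j} - \pi_{\theta_0}\|_{C^0(\text{supp}(\nu))} < \delta$. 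Applying Theorem~\ref{C1images} to $\nu$ and using Theorem~\ref{EEE} (with $\theta_0 \in \mathcal{U}_\varepsilon$) gives
\[
\dim_\H \tilde g_{r_j} \nu \ > \ \dim_\H \pi_{\theta_0} \nu - \varepsilon \ \geq \ \min\{1, \dim_\H \mu\} - 2\varepsilon.
\]
By construction, for each $j$ one has $\tilde g_{r_j}(\text{supp}(\tilde \mu_{r_j})) \subset (D(\text{supp}(\mu)) - L)/r_j$, so that $\dim_\H \tilde g_{r_j} \tilde \mu_{r_j} \leq \dim_\H D(\text{supp}(\mu))$; a standard stability argument — combining the weak-$*$ convergence $\tilde \mu_{r_j} \to \nu$ with the exact-dimensionality of micromeasures in the CP-chain framework (\cite[Lemma 7.9]{HochmanShmerkin}) — transfers the dimension lower bound on $\tilde g_{r_j} \nu$ to the finite-scale measure $\tilde g_{r_j}\tilde \mu_{r_j}$ for $j$ large. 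Letting $\varepsilon \to 0$ yields the result.

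The principal obstacle is the second paragraph: rigorously arguing that $\mathcal{H}^1(\text{supp}(\mu)) > 0$ forces $\Theta(\mu)$ to meet every open dense subset of $S^1$. This is a quantitative ``directions-between-points'' statement for $1$-sets and is where the full strength of the $\mathcal{H}^1$ hypothesis is genuinely used (as opposed to the weaker $\dim_\H \text{supp}(\mu) \geq 1$). A secondary technical difficulty is the final dimension-transfer step: since Hausdorff dimension is not in general weakly continuous, moving from $\dim_\H \tilde g_{r_j} \nu$ back to $\dim_\H \tilde g_{r_j} \tilde \mu_{r_j}$ requires genuine use of the rigid scenery imposed by the CP-chain structure rather than soft compactness arguments.
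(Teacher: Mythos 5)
Your overall strategy (the pinned distance map is $C^1$-close to an orthogonal projection at small scales, then feed this into Theorems \ref{EEE} and \ref{C1images}) is the same as the one used in the paper's proof of the stronger Theorem \ref{FFS+dist}, but both of the steps you flag as obstacles are genuine gaps, and in both cases the paper's argument resolves them in a way your proposal misses.

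First, the selection of the direction. You require the realised direction $\theta_0$ to lie in the good set $\mathcal{U}_\varepsilon$ itself, and to get this you assert that $\Theta(\mu)$ has positive Lebesgue measure by ``a Marstrand-type pinned projection argument'' and that positive measure forces intersection with every open dense set. The second implication is false (a fat Cantor set of directions misses the open dense complement); you would instead need the full-measure part of Theorem \ref{EEE}. More seriously, the positive-measure claim is unsubstantiated: Marstrand's radial projection theorem needs dimension strictly greater than $1$, and here you only have an $\mathcal{H}^1$-positive support, possibly purely unrectifiable. Also your dichotomy ``contained in a line / not'' is the wrong one: for a support contained in a rectifiable curve but not a line, your one-dimensional argument does not apply and density or positive measure of the direction set can fail. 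The paper needs much less: by Orponen's observation, if $\mathcal{H}^1(\mathrm{supp}(\mu))>0$ and the support is not contained in a rectifiable curve, the direction set is merely \emph{dense}; one then fixes a good $\pi\in\mathcal{U}_\varepsilon$ and picks a pair of points whose direction is only $\delta'$-\emph{close} to $\pi$, which is all Theorem \ref{C1images} requires. The rectifiable-curve case is disposed of separately by Besicovitch--Miller \cite{besmil}, which gives an interval inside the distance set.

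Second, the limiting step. You apply Theorems \ref{EEE} and \ref{C1images} to a micromeasure $\nu$ obtained as a weak-$*$ limit of the rescaled restrictions $\tilde\mu_{r_j}$, and then try to transfer the lower bound on $\dim_\H \tilde g_{r_j}\nu$ back to $\dim_\H \tilde g_{r_j}\tilde\mu_{r_j}$. There is no ``standard stability argument'' that does this: Hausdorff dimension is not weak-$*$ semicontinuous in the needed direction, and, worse, the support of the micromeasure $\nu$ need not be carried into the rescaled distance set by $\tilde g_{r_j}$ at all, so a bound on $\dim_\H \tilde g_{r_j}\nu$ says nothing directly about $D(\mathrm{supp}(\mu))$. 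The paper avoids the limit entirely: the restricted and normalised measure $\mu(B(y,r))^{-1}\mu|_{B(y,r)}$ already generates the \emph{same} ergodic CP-chain as $\mu$, by the Besicovitch density point theorem (see \cite[Lemma 7.3]{HochmanShmerkin}), so Theorem \ref{C1images} is applied to it directly at the finite scale $r$, and the image measure $g\nu$ is genuinely supported on a rescaled copy of $D(\mathrm{supp}(\mu))$. Incorporating this lemma, and replacing your direction-selection step by the density-plus-perturbation argument above, is what is needed to close your proof.
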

This theorem was applied in \cite{FergusonFraserSahlsten} to prove Conjecture \ref{distanceconj} for certain planar self-affine carpets.

\subsection{Our setting}

\subsubsection{Invariant measures on subshifts}

Let $\mathcal{I} = \{0, \dots, M-1\}$ be a finite alphabet, let $\Sigma  = \mathcal{I}^{\mathbb{N}}$ and $\sigma : \Sigma \to \Sigma$ be the one-sided left shift.  Write $i \in \mathcal{I}$, $\textbf{\emph{i}} = (i_0 , \dots, i_{k-1}) \in \mathcal{I}^k$, $\alpha = (\alpha_0, \alpha_1, \dots) \in \Sigma$ and $\alpha\vert_k = (\alpha_0, \dots, \alpha_{k-1}) \in \mathcal{I}^k$ for the restriction of $\alpha$ to its first $k$ coordinates.  We equip $\Sigma$ with the standard metric defined by
\[
d(\alpha,\beta) = 2^{-n(\alpha,\beta)}
\]
for $\alpha \neq \beta$, where $n(\alpha,\beta) = \max\{ n \in \mathbb{N} : \alpha\vert_n = \beta\vert_n\}$.  Any closed $\sigma$-invariant set $\Lambda \subseteq \Sigma$ is called a \emph{subshift}.  Among the most important subshifts are \emph{subshifts of finite type} which are defined as follows.  Let $A$ be an $M \times M$ \emph{transition matrix} indexed by $\mathcal{I} \times \mathcal{I}$ with entries in $\{0,1\}$.  We define the subshift of finite type corresponding to $A$ by
\[
\Sigma_A \ = \ \Big\{\alpha = (\alpha_0 \alpha_1 \dots ) \in \Sigma : A_{\alpha_i, \alpha_{i+1}} = 1 \text{ for all } i =0, 1, \dots  \Big\}.
\]
We say (the shift on) $\Sigma_A$ is \emph{transitive} if the matrix $A$ is irreducible, which means that for all pairs $i,j \in \mathcal{I}$, there exists $n \in \mathbb{N}$ such that $(A^n)_{i,j} >0$.  We say (the shift on) $\Sigma_A$ is \emph{mixing} if the matrix $A$ is aperiodic, which means that there exists $n \in \mathbb{N}$ such that $(A^n)_{i,j} >0$ for all pairs $i,j \in \mathcal{I}$ simultaneously.  For $\alpha \in \Sigma_A$ and $n \in \mathbb{N}$ write
\[
[\alpha\vert_n] = \{ \beta \in \Sigma_A : \beta\vert_n = \alpha\vert_n \}
\]
to denote the cylinder corresponding to $\alpha$ at depth $n$.  The cylinders generate the Borel $\sigma$-algebra for $( \Sigma_A, d)$.  An important class of measures naturally supported on subshifts of finite type are \emph{Gibbs measures}, see \cite{Bowen}.  Let $\phi:\Sigma_A \to \mathbb{R}$ be a continuous potential and define the $n$th variation of $\phi$ as
\[
\text{var}_n(\phi) \ = \ \sup_{\alpha, \beta \in \Sigma_A} \Big\{ \lvert \phi(\alpha)  - \phi(\beta) \rvert : \alpha\vert_n = \beta\vert_n  \Big\}.
\]
It is clear that $\text{var}_n(\phi) $ forms a decreasing sequence and that $\text{var}_n(\phi)  \to 0$ is equivalent to $\phi$ being continuous. We will assume throughout that $\phi$ has \emph{summable variations}, i.e.
\[
\sum_{l=0}^{\infty} \text{var}_{l}(\phi) < \infty.
\]
For $n \in \mathbb{N}$, let
\[
 \phi^n(\alpha)  \ = \ \sum_{l=0}^{n-1} \phi\big(\sigma^{l}(\alpha) \big).
\]
A measure $\mu \in \mathcal{P}(\Sigma_A)$ is called a \emph{Gibbs measure} for $\phi$ if there exists constants $C_1, C_2$ such that
\begin{equation} \label{gibbsbowen}
C_1\ \leq \ \frac{\mu\big( [\alpha\vert_n]\big)}{\exp( \phi^n(\alpha) - n P(\phi))} \ \leq \ C_2
\end{equation}
for all $\alpha \in \Sigma_A$ and all $n \in \mathbb{N}$, where $P(\phi)$ is the pressure of $\phi$.  If $\phi$ has summable variation and $\Sigma_A$ is mixing, then it has a unique $\sigma$-invariant Gibbs measure, but we do not necessarily assume our Gibbs measures are invariant in this paper.

\subsubsection{Conformally generated sets and measures}

Let $\{S_i\}_{i \in \mathcal{I}}$ be a finite collection of contracting self-maps on a compact metric space $X$ (which later we will assume to be a compact subset of either $\mathbb{C}$ or $\mathbb{R}^d$) indexed by the alphabet $\mathcal{I}$. For $\alpha \in \Sigma$ and $k \in \mathbb{N}$, write
\[
S_{\alpha\vert_k} = S_{\alpha_0} \circ \cdots \circ S_{\alpha_{k-1}},
\]
and
\[
\text{Lip}^+(S_{\alpha\vert_k}) \ = \  \sup_{x,y \in X} \frac{\lvert S_{\alpha\vert_k}(x)-S_{\alpha\vert_k}(y) \rvert}{\lvert x-y \rvert}
\]
and
\[
\text{Lip}^-(S_{\alpha\vert_k}) \ = \  \inf_{x,y \in X}  \frac{\lvert S_{\alpha\vert_k}(x)-S_{\alpha\vert_k}(y) \rvert}{\lvert x-y \rvert}
\]
for the upper and lower Lipschitz constants respectively.  Define the natural coding map $\Pi: \Sigma \to X$ by
\[
\Pi(\alpha) = \bigcap_{k=1}^\infty S_{\alpha\vert_k} (X)
\]
and let $F = \Pi(\Sigma)$ and $F_A = \Pi(\Sigma_A)$. Sometimes we will be interested in the first level cylinders of $F_A$, so for $i \in \mathcal{I}$ let $F_A^i =\Pi(\Sigma_A \cap [i])$.  Let $\mu_\text{sym}$ be a Gibbs measure for a potential with summable variations supported on a subshift of finite type and let $\mu = \mu_{\text{sym}} \circ \Pi^{-1}$.  We say that $\mu$ is a Gibbs measure for $F_A$ and observe that $\text{supp}(\mu) = F_A$. We will sometimes be interested in $\mu$ restricted to first level cylinders and so for $i \in \mathcal{I}$, write $\mu_i$ for $\mu$ restricted to $F_A^i$.  The sets $F$ and $F_A$ and measures $\mu$ are our main object of study in this article.  Note that for all $i \in \mathcal{I}$, $\dim_\H \mu_i = \dim_\H \mu$ and $\dim_\H F_A^i = \dim_\H F_A$.  Some of our results will require a certain `separation condition', which we now state. We say two finite words are \emph{incomparable} if neither is word is a subword of the other.

\begin{defn}[Strong separation property]  The set $F_A$ satisfies the \emph{strong separation property} if for all $\alpha, \beta \in \Sigma_A$ and $k,l \in \mathbb{N}$ such that $\alpha\vert_k$ and $\beta\vert_l $ are incomparable, we have $ S_{\alpha\vert_k}(X)  \cap S_{\beta\vert_l}(X)= \emptyset$.   
\end{defn}

We will now specialise to two particular settings:
\\ \\
(1) We will say the system $\{S_i\}_{i \in \mathcal{I}}$ is a \emph{conformal system} if the compact  metric space $X$ on which the maps $S_i$ act is the closure of some open simply connected region $U \subseteq \mathbb{C}$ and each map $S_i$ is conformal on $U$.  We assume for convenience that $U = \{z=x+iy \in \mathbb{C} : x,y \in (0,1)\}$, which we may do by applying the Riemann Mapping Theorem.  Recall that such a map is conformal if and only if it is holomorphic (equivalently analytic) on $U$ with non-vanishing derivative.  Two simple consequences of this assumption are that the Jacobian derivative $D_xS_i$ of $S_i$ exists at every $x \in U$ and is equal to a scalar times an orthogonal matrix and that there exists a uniform constant $L\geq 1$ such that for all $\alpha \in \Sigma$ and $k \in \mathbb{N}$
\[
\frac{\text{Lip}^+(S_{\alpha\vert_k})}{\text{Lip}^-(S_{\alpha\vert_k})} \ \leq \ L.
\]
This last phenomenon is often referred to as \emph{bounded distortion}. Two key examples of sets which can be realised by conformal systems are \emph{limit sets of Schottky groups} and \emph{hyperbolic Julia sets}.  (Classical) Schottky groups are a special type of Kleinian group generated by reflections in a collection of disjoint circles in some region of the complex plane.  As such the limit set can be realised as $F_A$ for a conformal system and a subshift of finite type with matrix $A$ having 1s everywhere apart from the main diagonal where it has 0s due to the fact that the part of the limit set inside one particular circle will not contain a copy of itself.  Julia sets $J$ on the other hand are dynamical repellers for complex rational maps $f$, and if $J$ lies in a bounded region of the complex plane and $f$ is strictly expanding on $J$, then $J$ can be viewed as the self-conformal attractor of the iterated function system formed by the inverse branches of $f$ defined on a neighbourhood of $J$.  Such Julia sets can thus be realised as $F$ in our setting.
\begin{figure}[H] 
	\centering
	\includegraphics[width=125mm]{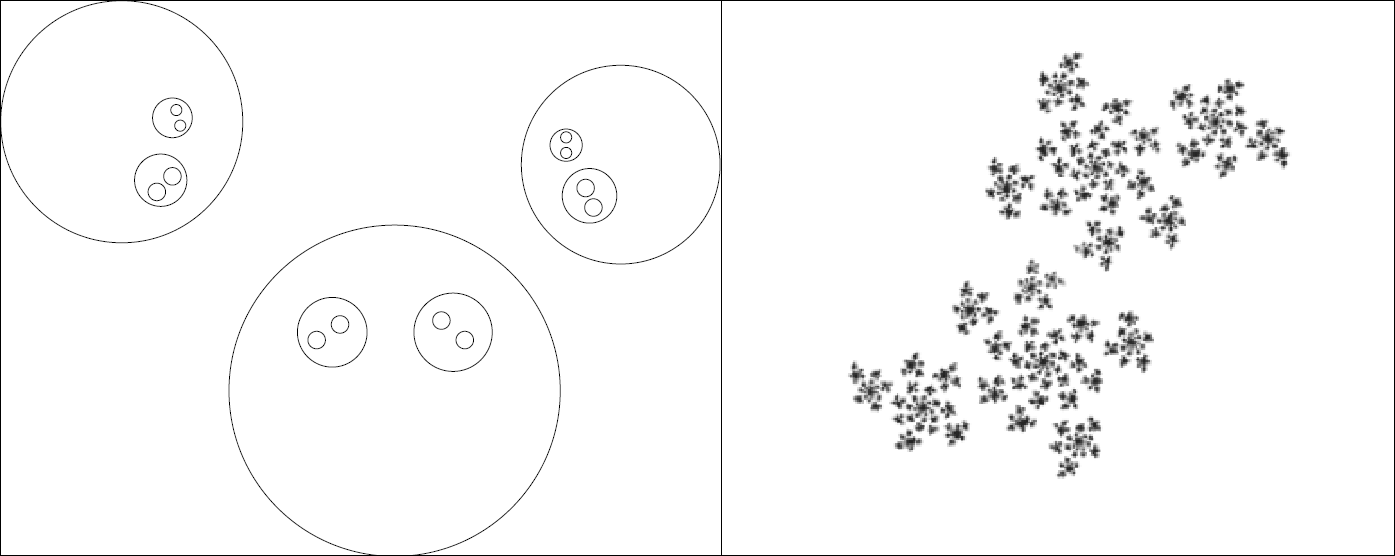}
\caption{Left: circles generating a Schottky group and the construction of the limit set.  Right: a self-conformal Julia set.}
\end{figure}
(2) We will say the system $\{S_i\}_{i \in \mathcal{I}}$ is a \emph{system of similarities} if the compact metric space $X$ on which the maps $S_i$ act is a compact subset of Euclidean space, which we assume is equal to $[0,1]^d$ for some $d \in \mathbb{N}$, and each map $S_i$ is a similarity.  Two key examples of sets which can be realised by  systems of similarities are \emph{self-similar sets} and \emph{graph-directed self-similar sets}, see \cite[Chapter 9]{Falconer}.  Indeed, the set $F$ corresponding to the full shift is a self-similar set and for a transitive subshift of finite type $\Sigma_A$ the first level cylinders $F_A^i$ ($i \in \mathcal{I}$) of $F_A$ form a family of graph-directed self-similar sets and every such family can be realised in this way, see \cite[Propositions 2.5-2.6]{FarkasFraser} for example.

\section{Results}

\subsection{Scenery for Gibbs measures on conformally generated fractals}

 The results in this section aim to provide a link between the Gibbs measures we study in this paper, their micromeasures and their micromeasure distributions.  This allows us to apply the machinery of CP-chains to conformally generated fractal sets and measures.

\begin{thm} \label{symbtogeom}
Consider a conformal system and a subshift of finite type satisfying the strong separation property and let $\mu$ be a Gibbs measure for $F_A$.  Then, for all $\nu \in  \text{Mini}(\mu) \cup \text{Micro}(\mu)$ which are not supported on the boundary of the square $\overline{U}$, there exists a conformal map $S$ on $U$ and a measure $\mu_0 \equiv \mu_i$ for some $i \in \mathcal{I}$, such that $\nu(S(F_A))>0$ and
\[
\nu\vert_{S(F_A)} \ = \ \mu_0 \circ S^{-1}.
\]
\end{thm}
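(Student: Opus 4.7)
The plan is to prove the statement first for minimeasures by a direct construction, and then lift to micromeasures by a normal-families / weak-$*$ compactness argument. For a minimeasure $\nu = \mu^B$ with $B = \Delta_b^n(x)$, the hypothesis that $\nu$ is not supported on $\partial\overline{U}$ yields a point $y \in F_A$ in the interior of $B$. Writing $y = \Pi(\alpha)$ for some $\alpha \in \Sigma_A$ and using that $\operatorname{diam}(S_{\alpha|_k}(X)) \to 0$, I would choose the minimal $k$ with $S_w(X) \subseteq B$, where $w = \alpha|_k$, pick any $i \in \mathcal{I}$ with $A_{w_{k-1}, i} = 1$, and set $S := T_B \circ S_w$, which is conformal on $U$. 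The strong separation property identifies $\Pi^{-1}(S_w(F_A^i))$ with the symbolic cylinder $[w \cdot i]$, and the Gibbs property with summable variations gives the standard bound
\[
\mu_{\mathrm{sym}}([w \cdot v]) \ \asymp \ \mu_{\mathrm{sym}}([w])\, \mu_{\mathrm{sym}}([v])
\]
for admissible extensions $v$. Setting $\mu_0 := (S_w^{-1})_*(\mu|_{S_w(F_A^i)})/\mu(B)$, this comparison yields $\mu_0 \equiv \mu_i$ with bounded Radon--Nikodym derivative, and unpacking the definitions gives $\nu|_{S(F_A^i)} = \mu_0 \circ S^{-1}$, establishing the minimeasure case (with $S(F_A^i)$ being the relevant piece of $S(F_A)$ carrying all the mass).

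For a micromeasure $\nu = \lim_n \mu^{B_n}$ with $B_n = \Delta_b^{N_n}(x)$, I would apply the above to each $B_n$ to obtain data $(w_n, i_n, S_n, \mu_0^{(n)})$, and pass to a subsequence with $i_n = i$ constant by finiteness of $\mathcal{I}$. The decisive compactness estimate is that, since $\nu$ is not concentrated on $\partial\overline{U}$, for large $n$ one may take the interior point $y_n \in B_n$ at distance $\geqslant \varepsilon b^{-N_n}$ from $\partial B_n$ for some fixed $\varepsilon > 0$. Then the parent cylinder $S_{w_n|_{k_n-1}}(X)$ (which contains $y_n$ but is not contained in $B_n$ by minimality of $k_n$) has diameter at least $\varepsilon b^{-N_n}$, and bounded distortion forces $\operatorname{diam}(S_{w_n}(X)) \asymp b^{-N_n}$. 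Hence $\|D S_n\|$ is uniformly bounded above and below on $U$, so $\{S_n\}$ is a normal family by Montel's theorem; a subsequence converges locally uniformly to a limit $S$, which is nonconstant by the lower bound and hence conformal by Hurwitz's theorem. In parallel, the $\mu_0^{(n)}$ are uniformly equivalent to $\mu_i$, so weak-$*$ compactness produces a subsequential limit $\mu_0$ with $\mu_0 \equiv \mu_i$.

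The last step is to pass to the limit in the identity $\mu^{B_n}|_{S_n(F_A^i)} = \mu_0^{(n)} \circ S_n^{-1}$: the right-hand side converges weakly to $\mu_0 \circ S^{-1}$ by the locally uniform convergence of $S_n$ together with the weak convergence of $\mu_0^{(n)}$, while for the left-hand side I would show $S_n(F_A^i) \to S(F_A^i)$ in Hausdorff distance and use non-concentration of $\nu$ near $\partial S(F_A^i)$ (inherited from the non-boundary hypothesis) to conclude $\nu|_{S(F_A^i)} = \mu_0 \circ S^{-1}$. The principal obstacles I anticipate are (i) the uniform derivative bounds on $S_n$, which require the combined use of bounded distortion, the minimality of $k_n$, and the extraction of a uniform interior margin inside $B_n$ from the non-boundary hypothesis; and (ii) the passage to the limit in the restriction $\mu^{B_n}|_{S_n(F_A^i)}$, where the restricting sets themselves vary with $n$ and must be controlled so as to avoid boundary artifacts in the weak-$*$ limit.
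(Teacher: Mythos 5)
Your route is the same as the paper's: for minimeasures, a stopping-time cylinder $S_w(X)\subseteq B$ with $S=T_B\circ S_w$ and the quasi-Bernoulli (Gibbs) comparison $\mu_{\mathrm{sym}}([wv])\asymp\mu_{\mathrm{sym}}([w])\mu_{\mathrm{sym}}([v])$, which is exactly the paper's Lemma \ref{gibbsmeasures2}; for micromeasures, uniform bi-Lipschitz bounds on the rescaled maps followed by normal families/Arzel\`a--Ascoli and Prokhorov. Your step (i) is sound: the interior margin coming from the non-boundary hypothesis plus minimality of $k_n$ and bounded distortion does give $\mathrm{diam}\,S_{w_n}(X)\asymp b^{-N_n}$ and hence the derivative bounds (the paper gets the same bounds by stopping at a Lipschitz threshold, inequality (\ref{bddaway0})). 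The first genuine thin spot is your unsupported claim that the $\mu_0^{(n)}$ are \emph{uniformly} equivalent to $\mu_i$. The Gibbs comparison only yields $d\mu_0^{(n)}/d\mu_i\asymp \mu_{\mathrm{sym}}([w_n])/\mu(B_n)$ up to constants, and nothing in your sketch bounds this ratio below uniformly in $n$; if it tended to $0$, the Prokhorov limit $\mu_0$ could be the zero measure and both $\mu_0\equiv\mu_i$ and $\nu(S(F_A))>0$ would fail. The paper closes precisely this point (inequality (\ref{bddaway})) by invoking that $\mu$ is doubling, which together with $S_{w_n}(X)\subseteq B_n$, $\mathrm{diam}\,S_{w_n}(X)\asymp b^{-N_n}$ and the interior margin gives $\mu(B_n)\lesssim\mu_{\mathrm{sym}}([w_n])$; you have all the ingredients but must supply this argument.

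The second issue is the mechanism you propose for the final limit passage (your obstacle (ii)). Non-concentration of $\nu$ near $\partial S(F_A^i)$ is \emph{not} inherited from the hypothesis that $\nu$ is not supported on $\partial\overline{U}$; that hypothesis only provides the interior margin used in step (i). What actually prevents mass of $\nu_n$ from piling up on $S(F_A^i)$ from outside is the strong separation property: any point of $F_A\setminus S_{w_n}(F_A^{i})$ lies in a cylinder incomparable with $w_n i$, hence at distance at least a constant times $\mathrm{Lip}^-\big(S_{w_n|_{k_n-1}}\big)$ from $S_{w_n}(F_A^{i})$; after rescaling by $T_{B_n}$ and using $\mathrm{diam}\,S_{w_n}(X)\asymp b^{-N_n}$ this is a gap of definite width around $S_n(F_A^i)$ carrying no mass of $\nu_n$, and combined with the Hausdorff convergence $S_n(F_A^i)\to S(F_A^i)$ it identifies the weak limit of $\nu_n\vert_{S_n(F_A^i)}$ with $\nu\vert_{S(F_A^i)}$ (the paper is itself terse here, deducing the identity directly from $\nu_l\to_{w^*}\nu$). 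With these two repairs --- the doubling-based uniform mass bound and the separation-gap argument in place of the boundary hypothesis --- your proposal coincides in substance with the published proof, the remaining differences (Montel/Hurwitz versus Arzel\`a--Ascoli plus injectivity from the uniform lower Lipschitz bound, and the choice of stopping rule) being cosmetic.
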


We will prove Theorem \ref{symbtogeom} in Section \ref{symbtogeomproof}.  A similar result was proved by Hochman and Shmerkin in the case of full shifts for systems on the unit interval where each map is $C^{1+\alpha}$ \cite[Proposition 11.7]{HochmanShmerkin}. The analogous result for systems of similarities is proved similarly and is stated without proof.

\begin{thm} \label{symbtogeom2}
Consider a system of similarities and a subshift of finite type $\Sigma_A$ satisfying the strong separation property  and let $\mu$ be a Gibbs measure for $F_A$.   Then, for all $\nu \in  \text{Mini}(\mu) \cup \text{Micro}(\mu)$ which is not supported on the boundary of the hypercube $[0,1]^d$, there exists a similarity map $S$ on $[0,1]^d$ and a measure $\mu_0 \equiv \mu_i$ for some $i \in \mathcal{I}$, such that $\nu(S(F_A))>0$ and
\[
\nu\vert_{S(F_A)} \ = \  \mu_0 \circ S^{-1}.
\]
\end{thm}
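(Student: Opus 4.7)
The plan is to imitate the proof of Theorem \ref{symbtogeom}, exploiting the simplification that for similarities the bounded distortion constant satisfies $L=1$. First I would establish the statement for minimeasures by embedding a symbolic cylinder $S_{\mathbf{i}}([0,1]^d)$ inside the relevant $b$-adic box $B$, and then pass to general micromeasures via a weak-$*$ compactness argument.

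For a minimeasure $\nu = \mu^B$, the hypothesis that $\nu$ is not supported on $\partial[0,1]^d$ supplies a point $y \in \mathrm{int}([0,1]^d) \cap \mathrm{supp}(\nu)$; pulling back, $z = T_B^{-1}(y) \in \mathrm{supp}(\mu) \cap B$ lies at positive distance from $\partial B$. I would pick $\alpha \in \Pi^{-1}(z) \cap \Sigma_A$ and choose the least $n$ for which $S_{\alpha\vert_n}([0,1]^d) \subseteq B$; this $n$ is finite because the contraction ratios of the $S_{\alpha\vert_n}$ tend uniformly to zero. Setting $\mathbf{i} = \alpha\vert_n$, $i = \alpha_{n-1}$ and $S = T_B \circ S_{\mathbf{i}}$, one obtains a similarity of $[0,1]^d$ as a composition of similarities. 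By the strong separation property, $F_A \cap S_{\mathbf{i}}([0,1]^d) = \Pi([\mathbf{i}])$, so $\mu|_{S(F_A)}$ is controlled by $\mu_{\mathrm{sym}}|_{[\mathbf{i}]}$ through $\Pi$. The Gibbs bound \eqref{gibbsbowen} together with summable variations gives
\[
\mu_{\mathrm{sym}}([\mathbf{i}\mathbf{j}]) \ \asymp \ \exp\!\big(\phi^n(\mathbf{i}\mathbf{j}\ldots) - nP(\phi)\big)\, \mu_{\mathrm{sym}}([\mathbf{j}])
\]
for every admissible extension $\mathbf{j}$, the exponential factor being pinched between positive constants depending only on $\mathbf{i}$ by the standard bounded-distortion argument. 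Projecting via $\Pi$ and using that the map $\omega \mapsto \mathbf{i}\omega$ intertwines $\sigma^n$ with $S_{\mathbf{i}}$, this translates to $\mu|_{S_{\mathbf{i}}(F_A)} \equiv \mu_0' \circ S_{\mathbf{i}}^{-1}$ for a measure $\mu_0'$ in the equivalence class of $\mu_i$. Post-composing with $T_B^{-1}$ and normalising by $\mu(B)^{-1}$ produces the claimed identity $\nu|_{S(F_A)} = \mu_0 \circ S^{-1}$.

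For a general micromeasure $\nu = \lim_j \mu^{B_j}$, I would apply the minimeasure construction to each $\mu^{B_j}$ to produce similarities $S^{(j)} = T_{B_j} \circ S_{\mathbf{i}^{(j)}}$ and indices $i^{(j)} \in \mathcal{I}$. Since $\mathcal{I}$ is finite one may pass to a subsequence with $i^{(j)} = i$ constant; the space of similarities of $[0,1]^d$ whose ratios lie in a compact subinterval of $(0,1]$ is itself compact, so a further subsequence produces a limit $S^{(j)} \to S$. A weak-$*$ limit in the identity obtained at step one then delivers the conclusion for $\nu$. The principal obstacle is precisely the non-degeneracy of this limiting similarity: a priori the contraction ratios of $S^{(j)}$ could tend to $0$, collapsing $S^{(j)}(F_A)$ to a point and rendering the conclusion vacuous. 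Ruling this out is the role of the boundary hypothesis, since degenerate ratios would force the cylinders $S_{\mathbf{i}^{(j)}}([0,1]^d)$ to accumulate on $\partial B_j$ and push the mass they carry onto $\partial [0,1]^d$ in the limit, contradicting the assumption on $\nu$. Turning this qualitative observation into a quantitative lower bound on the ratios of $S^{(j)}$ in terms of the distance from $\mathrm{supp}(\nu)$ to $\partial [0,1]^d$ is the main technical step.
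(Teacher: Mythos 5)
Your overall strategy is the same as the paper's (the paper proves the conformal case, Theorem \ref{symbtogeom}, and states that the similarity case is proved identically), but two points do not survive scrutiny. First, an indexing error in the minimeasure step: you set $\mathbf{i}=\alpha\vert_n$, $i=\alpha_{n-1}$ and claim $\mu\vert_{S_{\mathbf{i}}(F_A)}\equiv\mu_0'\circ S_{\mathbf{i}}^{-1}$ with $\mu_0'$ equivalent to $\mu_i$. Under the strong separation property, $F_A\cap S_{\mathbf{i}}([0,1]^d)=\Pi([\mathbf{i}]\cap\Sigma_A)=S_{\mathbf{i}}\big(\bigcup_{j:A_{\alpha_{n-1},j}=1}F_A^j\big)$, so pulling back by $S_{\mathbf{i}}$ and applying the quasi-Bernoulli estimate (Lemma \ref{gibbsmeasures2}) yields a measure equivalent to $\mu$ restricted to the \emph{union} of the first-level cylinders $F_A^j$ over all admissible successors $j$ of $\alpha_{n-1}$. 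This is not equivalent to $\mu_{\alpha_{n-1}}$, nor to any single $\mu_j$, as soon as $\alpha_{n-1}$ has two admissible successors (the candidate measures are then mutually singular on disjoint cylinders). The paper avoids this by restricting one level further: with $S=T_B\circ S_{\alpha\vert_n}$ one restricts $\nu$ to $S(F_A^{\alpha_n})$, i.e. to the image of the cylinder of the \emph{next} symbol, and Lemma \ref{gibbsmeasures2} then gives $\nu\vert_{S(F_A^{\alpha_n})}\equiv\mu_{\alpha_n}\circ S^{-1}$; so take $i=\alpha_n$, not $\alpha_{n-1}$.

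Second, the micromeasure step is not actually proved: you yourself flag the uniform lower bound on the contraction ratios of the $S^{(j)}$ as ``the main technical step'' and leave it open, and the route you suggest (a contradiction from degenerate ratios pushing mass onto $\partial[0,1]^d$) is not how it works, since degeneration only reflects a poor choice of the base points $y_j$, which are yours to choose. The paper's resolution is direct: fix $z\in\mathrm{supp}(\nu)\cap(0,1)^d$ and $r(z)=\tfrac12\,\mathrm{dist}(z,\partial[0,1]^d)$; by weak-$*$ convergence $\nu_j\to\nu$ one may choose, for all large $j$, points $x_j\in\mathrm{supp}(\nu_j)\cap B(z,r(z))$ and run the minimeasure construction at $x_j$, which gives contraction ratios of $S^{(j)}$ bounded below by a constant depending only on $r(z)$ and the minimal contraction ratio of the generators. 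The same uniformity is needed for the measures: one requires two-sided bounds, uniform in $j$, on the Radon--Nikodym derivatives of $\mu_0^{(j)}$ with respect to $\mu_{i}$ (Lemma \ref{gibbsmeasures2} plus a doubling-type lower bound on the weights $\mu\big(S_{\mathbf{i}^{(j)}}(F_A^{i})\big)/\mu(B_j)$, the analogue of the paper's $p(r)>0$), since otherwise the weak-$*$ limit could be the zero measure (losing $\nu(S(F_A^i))>0$) or fail to remain equivalent to $\mu_i$; your sketch omits this entirely. With the base points chosen near the fixed interior point $z$ and these uniform bounds in place, your compactness extraction (constant symbol $i$, convergent similarities, weak limit of the $\mu_0^{(j)}$) goes through exactly as in the paper.
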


 Understanding the micromeasures allows us to prove the following results.

\begin{thm} \label{symbtogeomCP}
Consider a conformal system and a subshift of finite type $\Sigma_A$  satisfying the  strong separation property and let $\mu$ be a Gibbs measure for $F_A$.   Then there exists a conformal map $S$ on $U$ and a measure $\mu_0 \equiv \mu_i$ for some $i \in \mathcal{I}$, such that $\mu_0 \circ S^{-1}$ generates an ergodic CP-chain of dimension at least $\dim_\text{\emph{H}} \mu$.
\end{thm}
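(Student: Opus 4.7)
The plan is to combine the Hochman--Shmerkin CP-chain extraction machinery with Theorem \ref{symbtogeom}, and then transport the resulting generating property across a restriction to a set of positive measure.

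Identifying $\overline{U}$ with $[0,1]^2$ so that $\mu \in \mathcal{P}([0,1]^2)$, I would first apply \cite[Theorem 7.10]{HochmanShmerkin} to obtain an ergodic CP-chain whose measure component $Q \in \mathcal{P}(\mathcal{P}([0,1]^2))$ is concentrated on $\text{Micro}(\mu)$ and whose dimension is at least $\dim_{\H} \mu$. Then \cite[Theorem 7.7]{HochmanShmerkin} tells us that $Q$-almost every $\nu \in \text{Micro}(\mu)$ itself generates the same CP-chain $Q$. I next want to pick such a $\nu$ which, in addition, is not supported on $\partial \overline{U}$, so that Theorem \ref{symbtogeom} applies. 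The existence of such a $\nu$ requires checking that the set of measures in $\mathcal{P}(\overline{U})$ supported on $\partial \overline{U}$ is $Q$-null; when $\dim_{\H} \mu > 1$ this is immediate from exact-dimensionality under $Q$, and in the general case one can exploit the fact that, under strong separation, $F_A$ sits in the interior of $U$ and so micromeasures produced by dyadic zoom-ins at generic points of $F_A$ also place mass in the interior.

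Fix such a $\nu$. By Theorem \ref{symbtogeom} there is a conformal map $S$ on $U$ and a measure $\mu_0 \equiv \mu_i$ with $\nu(S(F_A)) > 0$ and $\nu|_{S(F_A)} = \mu_0 \circ S^{-1}$. Write $\tilde{\mu}_0$ for the probability measure obtained by normalising $\mu_0 \circ S^{-1}$, so that $\tilde{\mu}_0 = \nu(\,\cdot\, \mid S(F_A))$. It then suffices to show that $\tilde{\mu}_0$ also generates $Q$, since this simultaneously gives an ergodic CP-chain generated by $\mu_0 \circ S^{-1}$ (up to a positive scalar) with dimension at least $\dim_{\H} \mu$.

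For this I would invoke the general principle that CP-chain generation is preserved by conditioning on a positive-measure set: if $\rho$ generates an ergodic CP-chain $Q$ and $\rho(E) > 0$, then $\rho(\,\cdot\, \mid E)$ also generates $Q$. Pointwise this reduces to the martingale differentiation statement $\rho(\Delta_b^n(x) \cap E)/\rho(\Delta_b^n(x)) \to 1$ at $\rho$-almost every $x \in E$, which forces the minimeasures $\rho^{\Delta_b^n(x)}$ and $\rho(\,\cdot\, \mid E)^{\Delta_b^n(x)}$ to agree in the limit, so their empirical Ces\`aro distributions share the same weak-$*$ limit $Q$. Applying this with $\rho = \nu$ and $E = S(F_A)$ completes the proof. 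The main obstacle is the boundary step in the second paragraph: ruling out $Q$-generic micromeasures that are entirely concentrated on $\partial \overline{U}$ in the low-dimensional regime. Once that is handled, the remaining steps are essentially a clean packaging of \cite[Theorems 7.7 and 7.10]{HochmanShmerkin} together with Theorem \ref{symbtogeom}.
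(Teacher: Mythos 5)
Your overall route is the paper's: extract an ergodic CP-chain $Q$ supported on $\text{Micro}(\mu)$ of dimension at least $\dim_\H \mu$ via \cite[Theorem 7.10]{HochmanShmerkin}, use \cite[Theorem 7.7]{HochmanShmerkin} to pick a $Q$-typical micromeasure $\nu$ that generates $Q$, apply Theorem \ref{symbtogeom} to $\nu$, and then transfer generation to the normalised restriction $\nu(\,\cdot \mid S(F_A))$; that last transfer is exactly \cite[Lemma 7.3]{HochmanShmerkin}, which the paper cites, so your density-point argument is the right mechanism there.

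The genuine gap is the step you yourself flag: ruling out $Q$-typical micromeasures supported on $\partial \overline{U}$. Your dimension argument does work when $\dim_\H \mu > 1$ (for an ergodic CP-chain the micromeasures are $Q$-a.s.\ exact dimensional with common dimension at least $\dim_\H \mu$, and $\partial \overline{U}$ has dimension $1$), but the theorem must cover $\dim_\H \mu \leq 1$, and there your proposed fix fails. First, the strong separation property does not force $F_A$ into the interior of $U$. More importantly, even if it did, this is irrelevant: micromeasures are weak limits of the rescaled measures $\mu^{\Delta^k_b(x)}$, and the relevant geometry is the position of $\mu$ \emph{inside the $b$-adic cells}, not inside $U$. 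The portion of $F_A$ in a small dyadic square can lie arbitrarily close to an edge of that square, so after rescaling the minimeasures can concentrate near $\partial [0,1]^2$ and their weak limits can be entirely supported on the boundary --- interior position of $F_A$ in $U$ gives no control on this. The paper closes this gap by going inside the proof of \cite[Theorem 7.10]{HochmanShmerkin}: one composes with a random homothety (translation), which does not change the set of micromeasures, and argues that almost surely with respect to this randomisation and the measure component of the resulting CP-chain the micromeasures give full mass to the open square $U$; one then fixes such a distinguished $\nu$ and proceeds as you do. Without this (or some substitute argument valid for all dimensions), your selection of a usable $\nu$ is not justified.
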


We will prove  Theorem \ref{symbtogeomCP} in Section \ref{symbtogeomCPproof}.   In some sense it is unsatisfying that we need to take a conformal image (by $S$) before we can generate a CP-chain.  This is essential however, as the following example demonstrates.   One dimensional Lebesgue measure on the upper half of the boundary of the unit circle in $\mathbb{C}$ is a Gibbs measure for a conformal system modelled by a full shift on two symbols.  The defining maps can be taken to be $z \mapsto \sqrt{z}$ and $z \mapsto i \sqrt z$ for example.  Even though this measure is very regular, it does not generate a CP-chain because, although the micromeasures at every $x$ are simply Lebesgue measure supported on a line segment, the line segments are at different angles corresponding to the slope of the tangent to the unit circle at that $x$.  As predicted by Theorem \ref{symbtogeomCP} there is a conformal image of $\mu$ which does generate an (ergodic) CP-chain and this is none-other than Lebesgue measure restricted to any line segment. Again, the analogous result for systems of similarities is proved similarly and is stated without proof.

\begin{thm} \label{symbtogeomCP2}
Consider a system of similarities and a subshift of finite type $\Sigma_A$ satisfying the strong separation property and let $\mu$ be a Gibbs measure for $F_A$.  Then there exists a similarity map $S$ on $[0,1]^d$ and a measure $\mu_0 \equiv \mu_i$ for some $i \in \mathcal{I}$, such that $\mu_0 \circ S^{-1}$ generates an ergodic CP-chain of dimension at least $\dim_\text{\emph{H}} \mu$.
\end{thm}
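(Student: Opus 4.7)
The plan is to mirror the argument for Theorem \ref{symbtogeomCP}, using the similarity-case geometric input (Theorem \ref{symbtogeom2}) in place of the conformal one. The two key external ingredients are Theorem 7.10 of \cite{HochmanShmerkin}, which hands us an ergodic CP-chain living on the micromeasures of any given measure and having dimension at least $\dim_\H \mu$, and Theorem 7.7 of \cite{HochmanShmerkin}, which guarantees that almost every micromeasure with respect to such a chain itself generates the chain.

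First, viewing $\mu$ as a measure on $[0,1]^d$, apply \cite[Theorem 7.10]{HochmanShmerkin} to produce an ergodic CP-chain with measure component $Q \in \mathcal{P}(\mathcal{P}([0,1]^d))$ supported on $\text{Micro}(\mu)$ and of dimension at least $\dim_\H \mu$. Next, I would check that the set of micromeasures which are entirely supported on the boundary of $[0,1]^d$ has $Q$-measure zero; this is a routine scaling-scenery argument, since concentration of the minimeasures $\mu^{\Delta_b^n(x)}$ near the boundary of the cube at a positive-density set of scales forces $x$ to lie in a $\mu$-null set of ``porous'' points. Hence $Q$-typical $\nu$ is not supported on the boundary, and Theorem \ref{symbtogeom2} applies to it, producing a similarity $S = S(\nu)$ and an index $i = i(\nu) \in \mathcal{I}$ with $\nu(S(F_A)) > 0$ and $\nu\vert_{S(F_A)} = \mu_i \circ S^{-1}$. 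Since $\mathcal{I}$ is finite, we may fix an index $i_0$ for which the set $\{\nu : i(\nu) = i_0\}$ has positive $Q$-measure, and set $\mu_0 = \mu_{i_0}$.

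Now choose a $\nu$ in this positive $Q$-measure set that additionally generates the CP-chain, which is possible by \cite[Theorem 7.7]{HochmanShmerkin}. The final step is to transfer CP-chain generation from $\nu$ to $\mu_0 \circ S^{-1}$. Because $\nu$ restricted to $S(F_A)$ has positive mass and agrees, up to the constant factor $\nu(S(F_A))$, with $\mu_0 \circ S^{-1}$, the scaling scenery of $\nu$ at $\nu$-typical points of $S(F_A)$ coincides (after conjugating the dyadic filtration by $S$ and rescaling mass) with the scaling scenery of $\mu_0 \circ S^{-1}$ at $\mu_0 \circ S^{-1}$-typical points. Applying the ergodic theorem along the CP-chain's base dynamics to the sub-sequence of scales at which one lands inside $S(F_A)$ then identifies the distributional limits on both sides and shows that $\mu_0 \circ S^{-1}$ generates the same ergodic CP-chain.

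The main obstacle is precisely this final transfer step: one must verify that the $b$-adic minimeasures of $\mu_0 \circ S^{-1}$ and the corresponding minimeasures of $\nu$ (conditioned on landing in $S(F_A)$) asymptotically agree, despite the fact that the standard $b$-adic grid on $[0,1]^d$ is not preserved by $S$ in general. This requires the usual reconciliation between the intrinsic dyadic scenery produced by $S$ and the ambient one, and is where the strong separation property and the similarity (rather than merely affine) nature of $S$ are used; otherwise the argument is routine once Theorem \ref{symbtogeom2} and the Hochman-Shmerkin machinery are in hand.
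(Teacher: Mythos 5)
There is a genuine gap, and it is exactly at the point you dismiss as routine. You claim that the set of micromeasures supported entirely on the boundary of $[0,1]^d$ is $Q$-null because concentration of minimeasures near the boundary would force the base point into a $\mu$-null set of porous points. This is false in general for systems of similarities. Take $S_0(x,y)=(0.4x+0.3,\,0.4y)$ and $S_1(x,y)=(0.4x+0.3,\,0.4y+0.6)$ on $[0,1]^2$ with the full shift: strong separation holds, the attractor is $\{1/2\}\times C$ for a Cantor set $C$, and for any Gibbs (e.g.\ Bernoulli) measure $\mu$ the first coordinate $1/2$ is the left endpoint of the dyadic interval containing it at every scale, so \emph{every} minimeasure, and hence every micromeasure, is supported on the face $\{0\}\times[0,1]$ of the cube. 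Thus any CP-chain supported on $\text{Micro}(\mu)$ has its measure component concentrated on boundary-supported measures, there is no positive-$Q$-measure set to which Theorem \ref{symbtogeom2} applies, and your argument cannot start. The paper's proof of the conformal model (Theorem \ref{symbtogeomCP}), which is the template for this statement, handles this by going back into the proof of Theorem 7.10 of \cite{HochmanShmerkin} and randomising by a homothety in the construction of the chain, so that almost surely the resulting ergodic CP-chain lives on micromeasures giving full mass to the open cube; the homothety is harmless because it can be absorbed into the similarity $S$ of the conclusion. Some modification of the construction of this kind is genuinely needed and cannot be replaced by the porosity claim.

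By contrast, the step you single out as the main obstacle is the easy one, and your framing of it is confused. Since $\mu_0\circ S^{-1}$ \emph{is} the restriction $\nu\vert_{S(F_A)}$, up to normalisation, as a measure on $[0,1]^d$, one never needs to compare an ``intrinsic'' scenery produced by $S$ with the ambient $b$-adic one, nor to conjugate the dyadic filtration by $S$: the statement to be proved concerns the standard $b$-adic scenery of the measure $\nu\vert_{S(F_A)}$ itself. That the normalised restriction of a measure generating an ergodic CP-chain to a set of positive measure generates the same chain is precisely Lemma 7.3 of \cite{HochmanShmerkin} (a Besicovitch density point argument), and this is how the paper concludes; the strong separation property and the similarity structure play no role in that final step. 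Your selection of a fixed index $i_0$ of positive $Q$-measure is harmless but unnecessary, since a single generating micromeasure not charging the boundary suffices once the first issue is repaired.
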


A similar result in the case of Bernoulli measures on full shifts can be found in \cite[Proposition 9.1]{HochmanShmerkin}.

\subsection{Geometric applications}

\subsubsection{Approximating overlapping systems from within}

If we have a Gibbs measure for a conformal system which does not satisfy the strong separation property, then analysing the scaling scenery and micromeasure structure can be complicated.  However, often one is only interested in studying the support of the measure, not the measure itself.  As such if one can `approximate the system from within' by finding a subsystem with sufficient separation and which approximates the Hausdorff dimension of the larger set to within any $\varepsilon$, then one can often get the desired results, even for systems with overlaps.  The key to doing this is the following proposition.

\begin{prop} \label{approxfromwithin}
Consider a conformal system or a system of similarities, let $\Sigma_A$ be a transitive subshift of finite type and let $\varepsilon>0$.  Then there exists a full shift $\Sigma_\varepsilon$ over an alphabet made up of a finite collection of restrictions of elements in $\Sigma_A$ such that
\[
\Pi(\Sigma_\varepsilon) \subseteq F_A,
\]
\[
\dim_{\text{\emph{H}}} \Pi(\Sigma_\varepsilon) \geq \dim_{\text{\emph{H}}} F_A - \varepsilon
\]
and such that the system corresponding to $\Sigma_\varepsilon$ satisfies the strong separation property.
\end{prop}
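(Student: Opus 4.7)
The plan is to build the alphabet of $\Sigma_\varepsilon$ out of admissible words in $\Sigma_A$ of the form $uwu$, where $u$ is a fixed ``pivot'' block and $w$ ranges over a carefully chosen finite collection of admissible words. Transitivity of $\Sigma_A$ lets us take $u$ to be one full period of a periodic orbit, so that the repetition $uu$ is admissible; this ensures that any concatenation $(uw_1u)(uw_2u)\cdots$ is itself in $\Sigma_A$, and hence the full shift generated by the padded alphabet codes into $F_A$ as required.

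To select the inner blocks, set $s = \dim_\H F_A$ and $t = s - \varepsilon/2$. By Frostman's lemma there is a probability measure $\nu$ on $F_A$ with $\nu(B(x,r)) \leq c_0 r^t$ for all $x$ and $r$. For each $n$ the cylinders $\{S_w(X) : w \text{ admissible},\ |w|=n\}$ cover $F_A$, and these cylinders have bounded eccentricity in both settings (automatic for similarities, and by bounded distortion for conformal maps). A Besicovitch-type covering argument applied to these cylinders, followed by pigeonholing into pairwise-disjoint colour classes, extracts a sub-collection $\mathcal{W}_n$ such that the images $\{S_w(X)\}_{w \in \mathcal{W}_n}$ are pairwise disjoint and
\[
\sum_{w \in \mathcal{W}_n} \text{Lip}^+(S_w)^t \ \gtrsim \ 1.
\]
Take the alphabet of $\Sigma_\varepsilon$ to be $\{uwu : w \in \mathcal{W}_n\}$. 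Since $S_{uwu}(X) \subseteq S_u(S_w(X))$ and $S_u$ is injective, pairwise disjointness transfers from $\{S_w(X)\}_{w \in \mathcal{W}_n}$ to $\{S_{uwu}(X)\}_{w \in \mathcal{W}_n}$, so the resulting sub-system satisfies the strong separation property.

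To establish the dimension lower bound, apply Moran's theorem (similarities) or Bowen's formula (conformal, using bounded distortion) to the strongly separated sub-system: $\dim_\H \Pi(\Sigma_\varepsilon)$ equals the unique $s^*$ solving $\sum_w \text{Lip}^-(S_{uwu})^{s^*} \asymp 1$. The key inequality is
\[
\sum_w \text{Lip}^-(S_{uwu})^{t-\varepsilon/2} \ \geq \ \Bigl(\max_w \text{Lip}^-(S_{uwu})\Bigr)^{-\varepsilon/2} \sum_w \text{Lip}^-(S_{uwu})^{t},
\]
which combined with multiplicativity of Lipschitz constants under composition and bounded distortion bounds the rightmost sum below by a fixed positive constant multiple of the earlier sum $\sum_{w} \text{Lip}^+(S_w)^t$, hence by a constant independent of $n$. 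Choosing $n$ large makes $\max_w \text{Lip}^-(S_{uwu})$ arbitrarily small, which forces the left-hand sum above $1$ and yields $s^* \geq t - \varepsilon/2 = s - \varepsilon$. The main obstacle throughout is bookkeeping: one must absorb multiplicative constants coming from Frostman, the Besicovitch overlap $K_d$, the pigeonholing, bounded distortion $L$, and the fixed pivot map $S_u$ into a single $\varepsilon$. Since none of these constants depends on $n$, while the $-\varepsilon/2$ power of the small quantity $\max_w \text{Lip}^-(S_{uwu})$ is at our disposal, the choice of $n$ can always be made large enough to swallow them.
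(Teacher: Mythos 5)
Your overall architecture (Frostman measure at exponent $t=s-\varepsilon/2$, a Vitali/Besicovitch disjointification of the level-$n$ cylinder cover to get a pairwise disjoint family $\mathcal{W}_n$ with $\sum_{w\in\mathcal{W}_n}\text{Lip}^+(S_w)^t$ bounded below, then the Moran/Falconer lower bound for a strongly separated full shift, trading the remaining $\varepsilon/2$ of exponent against $\max_w \text{Lip}^-$ by taking $n$ large) is sound and runs parallel to the paper's argument. But there is a genuine gap exactly at the point the paper identifies as the main difficulty: keeping the subsystem inside the subshift of finite type. Your alphabet consists of words ``of the form $uwu$'', where $w$ is produced by the covering argument from the set of \emph{all} admissible words of length $n$. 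Admissibility of $uu$ only controls the junctions \emph{between} consecutive alphabet letters; it says nothing about the junctions $u\to w$ and $w\to u$ \emph{inside} each letter. A word $w\in\mathcal{W}_n$ selected by the covering argument may begin or end with symbols that cannot follow (resp.\ precede) $u$, so $uwu$ need not be admissible, and then $\Pi(\Sigma_\varepsilon)\subseteq F_A$ fails. You cannot simply discard the bad $w$'s, since nothing prevents them from carrying essentially all of the mass $\sum_w \text{Lip}^+(S_w)^t$.

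The obvious repair --- inserting connecting words supplied by transitivity --- interacts badly with your disjointness argument: a connector placed between $u$ and $w$ depends on $w$, and since the original system is allowed to overlap (this is the whole point of the proposition), the sets $S_{p_w}(S_w(X))$ and $S_{p_{w'}}(S_{w'}(X))$ for distinct connectors $p_w\neq p_{w'}$ need not be disjoint, so first-level separation of the new alphabet is lost. The paper's resolution is asymmetric and worth adopting: run the covering argument only over words with a fixed first symbol (using $\dim_\H F_A^0=\dim_\H F_A$), so the left junction is automatic, and repair the right junction by appending a connecting word of uniformly bounded length as a \emph{suffix} only; since $S_{\textbf{\emph{i}}\textbf{\emph{j}}}(X)\subseteq S_{\textbf{\emph{i}}}(X)$, suffixes keep the images nested inside the pairwise disjoint balls from the covering lemma, and the bounded connector length changes Lipschitz constants by a bounded factor, which your constant-absorbing step can swallow. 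With that reorganization your proof goes through; as written, the admissibility of the alphabet (and hence $\Pi(\Sigma_\varepsilon)\subseteq F_A$) is unproved.
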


We will prove Proposition \ref{approxfromwithin} in Section \ref{approxfromwithinproof}.  Similar results have been proved before for overlapping \emph{self-similar} sets, which correspond to \emph{full shifts}, see \cite{Orponen, Farkas}.  The main difficulty in our generalisation was ensuring our subsystem remained inside the subshift of finite type, even if this is a strict subshift.

\subsubsection{Distance sets}

To prove the distance set conjecture for conformally generated fractals we rely on the method used in \cite{FergusonFraserSahlsten} to prove Theorem \ref{FFSdist}.  However, the result there is not quite strong enough to obtain the desired result for the nonlinear sets we consider here.  The reason for this is that Theorem \ref{symbtogeom} does not show that Gibbs measures supported on $F_A$ generate ergodic CP-chains, but rather a conformal image of them does.  This combined with Theorem \ref{FFSdist} would only yield the distance set conjecture for a particular conformal image of $F_A$, which is clearly unsatisfactory.  Thus we prove the following strengthening of Theorem \ref{FFSdist}. 

\begin{thm} \label{FFS+dist}
Let $\mu$ be a measure on $\mathbb{C}$ which generates an ergodic CP-chain and satisfies $\mathcal{H}^1\big(\text{\emph{supp}}(\mu)\big) >0$. Then
\[
\dim_{\text{\emph{H}}} D\big(S\big(\text{\emph{supp}}(\mu)\big) \big) \geq \min \{ 1, \ \dim_{\text{\emph{H}}} \mu \circ S^{-1} \} =  \min \{ 1, \ \dim_{\text{\emph{H}}} \mu \}
\]
for {\bf \emph{any}} conformal map $S$.
\end{thm}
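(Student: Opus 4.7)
My plan is to reduce to a \emph{pinned} distance set and then adapt the proof of Theorem \ref{FFSdist} by Ferguson--Fraser--Sahlsten, tracking the effect of the conformal map $S$ through the argument. First, the stated equality $\min\{1,\dim_\H \mu \circ S^{-1}\} = \min\{1, \dim_\H \mu\}$ is immediate: since $S$ is conformal on an open set containing the compact set $\text{supp}(\mu)$, its derivative $|S'|$ is continuous and non-vanishing and hence bounded above and away from zero on $\text{supp}(\mu)$; consequently $S$ is bi-Lipschitz on $\text{supp}(\mu)$, which preserves the Hausdorff dimension of measures. So the substance of the theorem is the inequality on the left.

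For the inequality I would fix $y\in\text{supp}(\mu)$ and consider the pinned distance map
\[
\Phi_y:\text{supp}(\mu)\to\mathbb{R},\qquad \Phi_y(x)=|S(x)-S(y)|.
\]
Since $\Phi_y(\text{supp}(\mu))\subseteq D(S(\text{supp}(\mu)))$ for every $y$, it is enough to establish $\dim_\H \Phi_y\mu\geq \min\{1,\dim_\H \mu\}$ for some -- and, following \cite{FergusonFraserSahlsten}, for $\mu$-almost every -- $y\in\text{supp}(\mu)$. The key structural observation is that the derivative of $\Phi_y$ is, up to a bounded scalar factor, an orthogonal projection onto a line. Writing $DS_x=\lambda(x)R(x)$ with $\lambda(x)>0$ a conformal factor and $R(x)$ a rotation, the chain rule gives, for $x\neq y$,
\[
D_x\Phi_y(w)\ =\ \lambda(x)\,\Big\langle R(x)w,\ \tfrac{S(x)-S(y)}{|S(x)-S(y)|}\Big\rangle,
\]
i.e.\ $D_x\Phi_y = \lambda(x)\pi_{v(x,y)}$ where $v(x,y)=R(x)^{\mathrm{T}}(S(x)-S(y))/|S(x)-S(y)|$ varies smoothly in $(x,y)$ off the diagonal. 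This is exactly the structure of the derivative of $d_y(x)=|x-y|$ in the proof of Theorem \ref{FFSdist}, perturbed only by the uniformly bounded smooth factor $\lambda(x)$ and the rotation $R(x)$.

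The plan is therefore to run the argument of \cite{FergusonFraserSahlsten} essentially verbatim, with $\Phi_y$ in place of $d_y$. At the heart of that argument one uses the ergodic CP-chain generated by $\mu$ to localise at a generic $x_0\in\text{supp}(\mu)$: on sufficiently small scales the pushforward $\Phi_y\mu$ locally looks like an orthogonal projection of a micromeasure of $\mu$ at $x_0$, so Theorem \ref{C1images} applied at the appropriate scale yields $\dim_\H\Phi_y\mu\geq \dim_\H\pi_{v(x_0,y)}\mu-\varepsilon$ for arbitrarily small $\varepsilon>0$. One then chooses $y$ -- or averages over $y$ with respect to $\mu$ -- so that the direction $v(x_0,y)$ lies outside the exceptional set supplied by Theorem \ref{EEE}, forcing $\dim_\H\pi_{v(x_0,y)}\mu\geq \min\{1,\dim_\H \mu\}-\varepsilon$. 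The hypothesis $\mathcal{H}^1(\text{supp}(\mu))>0$ is used exactly as in \cite{FergusonFraserSahlsten} to handle the borderline $\dim_\H\mu=1$ case.

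The main obstacle, I expect, is verifying that every ingredient of the Ferguson--Fraser--Sahlsten argument is robust under the replacement of $d_y$ by $\Phi_y$. The perturbation is mild -- a uniformly bounded, smoothly varying conformal factor $\lambda(x)$ together with a rotation $R(x)$ -- and the critical local projection structure of the derivative is preserved. The most delicate step should be the $L^2$-energy estimates for the densities of pinned distance measures (following the Orponen-type approach of \cite{FergusonFraserSahlsten}): these are sensitive to how the geometry varies with the base point, and one needs to check that uniform bounds on $\lambda(x)/\lambda(x')$ together with joint continuity of $v(x,y)$ off the diagonal suffice to push the estimates through without any dimensional loss.
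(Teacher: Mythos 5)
Your overall route is the same as the paper's: reduce to a pinned map $z\mapsto|S(z)-S(\text{pin})|$, observe that its derivative is a conformal factor times an orthogonal projection, and transfer dimension via Theorem \ref{EEE}, Theorem \ref{C1images} and the fact that normalised restrictions of $\mu$ generate the same CP-chain. The genuine gap is at the step ``choose $y$ so that $v(x_0,y)$ lies outside the exceptional set'': you never show such a $y$ exists, and with your order of quantifiers it may not. Having fixed the localisation point $x_0$ first, the only directions available to you are the pinned directions $\{v(x_0,y):y\in\text{supp}(\mu)\}$, and this set can easily be nowhere dense and hence entirely inside the exceptional set of Theorem \ref{EEE} --- for instance if $\text{supp}(\mu)$ is a subset of a line segment the pinned direction set is a single direction (up to sign), and Theorem \ref{EEE} gives no control over whether that particular direction is good. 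This is precisely where the hypothesis $\mathcal{H}^1(\text{supp}(\mu))>0$ does its work, not (as you assert) merely in a borderline case $\dim_{\text{H}}\mu=1$: by Orponen's observation, either $\text{supp}(\mu)$ lies on a rectifiable curve --- in which case so does $S(\text{supp}(\mu))$, and the Besicovitch--Miller theorem gives an interval inside $D(S(\text{supp}(\mu)))$, a branch your sketch omits and which the projection argument cannot reach --- or the direction set of $\text{supp}(\mu)$ is dense. In the dense branch the paper reverses your quantifiers: it first fixes a good $\pi$ from Theorem \ref{EEE}, then finds a \emph{pair} $x,y$ of support points inside a small ball $B(x_0,R)$ on which $S$ is nearly affine and whose direction is $\delta'$-close to $\pi$, and only then localises to $B(y,r)$; no genericity of $x_0$ is needed because $\mu(B(y,r))^{-1}\mu|_{B(y,r)}$ generates the same CP-chain by \cite[Lemma 7.3]{HochmanShmerkin}. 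Note also that pinned density at a fixed generic point does not follow from density of the full direction set, so even an ``almost every $y$'' formulation would need a separate argument.

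Two smaller points. Theorem \ref{C1images} requires the smallness condition $\|D_zg-\pi\|<\delta$, so the pinned map must be renormalised: the paper works with $g(z)=|S(z)-S(x)|/|D_{x_0}S|$, since $\lambda(x)\pi_{v}$ with $\lambda$ merely bounded away from $0$ and $\infty$ is not close to $\pi$; taking the pin and the localisation ball inside the same ball of near-linearity of $S$ also makes the rotation $R(x_0)$ cancel, so the relevant direction is the original one $(x-y)/|x-y|$ and the dense-direction argument can be applied directly to $\text{supp}(\mu)$. Finally, the obstacle you anticipate is not present: neither \cite{FergusonFraserSahlsten} nor the paper's proof uses $L^2$-energy estimates for pinned distance measures; the entire transfer is carried out by a single application of Theorem \ref{C1images} to the $C^1$ map $g$ on the small ball, the only analytic input about $S$ being its conformality (hence $C^1$ regularity and bounded distortion).
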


We will prove Theorem \ref{FFS+dist} in Section \ref{FFS+distproof}. Our main result on the distance set problem is the following.

\begin{thm} \label{distancesetsconformal}
Consider a conformal system or a system of similarities in the plane.  Then for any transitive subshift of finite type $\Sigma_A$ such that $\dim_{\text{\emph{H}}} F_A >1$, Falconer's distance set conjecture holds, i.e. 
\[
\dim_{\text{\emph{H}}} D \big(F_A  \big) =1.
\]
If we further assume the strong separation property, then the assumption $\dim_{\text{\emph{H}}} F_A >1$ can be relaxed to $\dim_{\text{\emph{H}}} F_A \geq 1$.
\end{thm}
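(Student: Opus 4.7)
The plan is to combine Theorems \ref{symbtogeomCP}, \ref{symbtogeomCP2}, \ref{FFS+dist} and Proposition \ref{approxfromwithin} in the following way. The overarching strategy is: find a Gibbs measure $\mu$ on $F_A$ with $\dim_\text{H} \mu \geq 1$; use Theorem \ref{symbtogeomCP} (respectively \ref{symbtogeomCP2}) to produce a conformal map $T$ (respectively a similarity) and a measure $\mu_0 \equiv \mu_i$ such that $\mu_0 \circ T^{-1}$ generates an ergodic CP-chain of dimension at least $\dim_\text{H}\mu$; then feed $\mu_0 \circ T^{-1}$ into Theorem \ref{FFS+dist} with the conformal map $S = T^{-1}$. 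Since $S(\text{supp}(\mu_0 \circ T^{-1})) = \text{supp}(\mu_0) \subseteq F_A$ and $(\mu_0 \circ T^{-1}) \circ (T^{-1})^{-1} = \mu_0$, the conclusion of Theorem \ref{FFS+dist} gives
\[
\dim_\text{H} D(F_A) \ \geq \ \dim_\text{H} D(\text{supp}(\mu_0)) \ \geq \ \min\{1,\dim_\text{H}\mu_0\} \ = \ \min\{1,\dim_\text{H}\mu\} \ = \ 1.
\]
The reverse inequality is trivial since $D(F_A) \subseteq \mathbb{R}$.

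First I would treat the strong separation case. If $\dim_\text{H} F_A > 1$, the thermodynamic formalism supplies a Gibbs measure $\mu$ for a H\"older potential with $1 < \dim_\text{H} \mu < \dim_\text{H} F_A$; the dimension strictly exceeding $1$ implies $\mathcal{H}^1(\text{supp}(\mu_0 \circ T^{-1})) = \infty$ automatically, so Theorem \ref{FFS+dist} applies. If $\dim_\text{H} F_A = 1$, instead take the conformal equilibrium state, namely the Gibbs measure for the potential $-s\log|S'_\bullet|$ with $s = \dim_\text{H} F_A = 1$; by Bowen's formula together with strong separation we have $0 < \mathcal{H}^1(F_A) < \infty$, and this measure is equivalent to $\mathcal{H}^1|_{F_A}$ with dimension exactly $1$. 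Because conformal maps on $U$ (respectively similarities on $[0,1]^2$) are bi-Lipschitz on compact subsets of $U$, the push-forward $\mu_0 \circ T^{-1}$ inherits the property $\mathcal{H}^1(\text{supp}(\mu_0 \circ T^{-1})) > 0$, completing the hypotheses of Theorem \ref{FFS+dist}.

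For the general overlapping case (where we only assume $\dim_\text{H} F_A > 1$), I would invoke Proposition \ref{approxfromwithin} with any $\varepsilon < \dim_\text{H} F_A - 1$ to extract a full-shift subsystem $\Sigma_\varepsilon$ satisfying the strong separation property with $\Pi(\Sigma_\varepsilon) \subseteq F_A$ and $\dim_\text{H} \Pi(\Sigma_\varepsilon) > 1$. The previous paragraph applied to this subsystem yields $\dim_\text{H} D(\Pi(\Sigma_\varepsilon)) = 1$, and the monotonicity $D(\Pi(\Sigma_\varepsilon)) \subseteq D(F_A)$ completes the argument.

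The main obstacle — and the reason we need the strengthened Theorem \ref{FFS+dist} rather than Theorem \ref{FFSdist} — is that Theorem \ref{symbtogeomCP} only guarantees an ergodic CP-chain generated by a conformal image of (a first-level cylinder restriction of) $\mu$, never by $\mu$ itself. The Lebesgue-on-arc example discussed after Theorem \ref{symbtogeomCP} shows that this feature is genuinely unavoidable, so the conformal invariance built into Theorem \ref{FFS+dist} is doing essential work here. A secondary subtlety is ensuring $\mathcal{H}^1(\text{supp}(\mu_0 \circ T^{-1})) > 0$ in the borderline case $\dim_\text{H} F_A = 1$, and this is precisely where the strong separation hypothesis enters — via Bowen's formula — to provide a dimension-one Gibbs measure whose support has positive one-dimensional Hausdorff measure.
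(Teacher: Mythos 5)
Your proposal is correct and follows essentially the same route as the paper: approximate from within via Proposition \ref{approxfromwithin} when overlaps are present, take a Gibbs measure of (nearly) full dimension on the separated subsystem, apply Theorem \ref{symbtogeomCP} (resp.\ \ref{symbtogeomCP2}), and feed the resulting measure into Theorem \ref{FFS+dist} with the inverse conformal map, finishing by monotonicity of distance sets. The only differences are cosmetic: the paper invokes the approximation proposition uniformly in the $\dim_{\H}F_A>1$ case and cites \cite{GatzourasPeres} for the full-dimension Gibbs measure, while you treat the strongly separated case directly and give slightly more detail (via Bowen's formula and the conformal measure) in the borderline case $\dim_{\H}F_A=1$.
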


We will prove Theorem \ref{distancesetsconformal} in Section \ref{distancesetsconformalproof}.  We note that this distance set result applies in several concrete settings.  Most simply it proves the conjecture for graph-directed self-similar sets without assuming any separation properties, hyperbolic Julia sets and limit sets of Schottky groups.  However, it also applies more generally since if $E \subseteq F$, then $D(E) \subseteq D(F)$. In particular, our result proves the conjecture for general Julia sets with \emph{hyperbolic dimension} strictly larger than 1.  For example, Bara\'nski, Karpi\'nska and Zdunik showed that this is the case for meromorphic maps with logarithmic tracts \cite{baranski}. Recall that hyperbolic dimension is the supremum of the Hausdorff dimensions of compact hyperbolic subsets.   It is an important open problem to determine for which rational maps the hyperbolic and Hausdorff dimensions of the associated Julia coincide, see \cite[Question 1.1]{rempe}.  This equality is known to hold for many classes of rational maps, for example those satisfying the \emph{topological Collet-Eckmann} (TCE) condition \cite[Theorem 4.3]{TCE}.  However, a recent announcement of Avila and Lyubich based on results from \cite{avila} states that certain Feigenbaum quadratic polynomials yield counter examples.  Similarly, in the more general setting of limit sets of Kleinian groups, if one can find a subset with Hausdorff dimension strictly greater than one which is the limit set of a Schottky group, then our result applies.
\\ \\
We also consider the following variant of the distance set conjecture where one only allows distances realised in a pre-determined set of directions $C \subseteq S^1$, which might be an arc for example.  We define the $C$-\emph{restricted distance set} of $K \subseteq \mathbb{C}$ to be
\[
D_{C}(K) \ = \  \Big\{ \lvert x-y\rvert \, : \, x,y \in K, \, \frac{x-y}{\lvert x-y\rvert} \in C \Big\}.
\]
Clearly one cannot expect the analogue of the distance set conjecture to hold for arbitrary $C$ and $K$.  Indeed, if $K$ is a line segment, then distances are only obtained in one direction and so $D_{C}(K)$ is empty if $C$ does not contain this direction.  However, the CP-chain approach is sufficient to prove the following extension of Theorem \ref{FFSdist}.

\begin{thm} \label{conicaldist}
Let $\mu$ be a measure on $\mathbb{C}$ which generates an ergodic CP-chain and suppose that $K= \text{\emph{supp}}(\mu)$ is not contained in a 1-rectifiable curve and that $\mathcal{H}^1(K) >0$. Then for any $C \subseteq S^1$ with non-empty interior and any conformal map $S$
\[
\dim_{\text{\emph{H}}} D_C\big(S(K) \big) \geq   \min \{ 1, \ \dim_{\text{\emph{H}}} \mu \}.
\]
\end{thm}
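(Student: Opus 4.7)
The plan is to adapt the FFS+ template of Theorem \ref{FFS+dist} by imposing a directional constraint on the pairs of points driving the argument. In that template, one fixes a generic base point $x_0 \in K$ and a generic companion point $y_0 \in K$ bounded away from $x_0$, considers the $C^1$ map $f(y) = |S(y)-S(x_0)|$, and observes that on a sufficiently small neighbourhood $V$ of $y_0$ its derivative is within any prescribed $\delta>0$ of the orthogonal projection in the direction $(S(y_0)-S(x_0))/|S(y_0)-S(x_0)|$. Theorem \ref{C1images} applied to the local micromeasure of $\mu$ at $y_0$ (which still generates the same ergodic CP-chain of dimension $\dim_\H\mu$) then yields
\[
\dim_\H f_*(\mu\vert_V) \ \geq \ \min\{1,\dim_\H\mu\} - \varepsilon
\]
for every $\varepsilon>0$, and the containment $f(\text{supp}(\mu\vert_V)) \subseteq D(S(K))$ establishes Theorem \ref{FFS+dist}.

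For Theorem \ref{conicaldist} it suffices to choose the pair $(x_0,y_0)$ so that $(S(y_0)-S(x_0))/|S(y_0)-S(x_0)|$ lies in the interior of $C$: because $C$ is open, the same directional inclusion then persists for all $y$ in a sufficiently small $V$, and the image $f(\text{supp}(\mu\vert_V))$ automatically sits inside $D_C(S(K))$. The remainder of the argument then runs verbatim, and the key additional fact that $K$ has positive $\mathcal{H}^1$-measure (inherited locally at $y_0$ via the CP-chain structure) is used exactly as in \cite{FergusonFraserSahlsten}.

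The main obstacle is the existence of such a directionally-constrained pair, and this is where the hypothesis that $K$ is not contained in a $1$-rectifiable curve enters. The strategy is as follows. Since $\mu$ generates an ergodic CP-chain, $Q$-almost every micromeasure $\nu$ of $\mu$ generates the same CP-chain; were the support of $Q$-almost every such micromeasure contained in a line, a classical tangent-measure characterisation of $1$-rectifiability would force $K$ itself to be $1$-rectifiable, contradicting the hypothesis. Consequently, $Q$-a.e.\ $\nu$ has support not contained in any line, and in particular contains three non-collinear points. Using the CP-chain self-similarity to zoom in repeatedly and interpolate the angular spread of these points into any prescribed open arc with positive $\nu$-mass, and then undoing the local conformal rotation $\arg S'(x_0)$, one can arrange that for a positive $\mu$-measure set of pairs $(x_0,y_0)$ the direction $(S(y_0)-S(x_0))/|S(y_0)-S(x_0)|$ lies in the interior of $C$. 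I expect this interpolation step---turning the qualitative statement ``support not contained in a line'' into the quantitative ``positive mass in every prescribed open arc of directions''---to require the most care and to be the principal technical burden of the proof.
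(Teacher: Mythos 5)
Your overall template (localise where $S$ is nearly its derivative, use the map $z\mapsto |S(z)-S(x)|$, restrict $\mu$ to a small ball which still generates the same CP-chain, and apply Theorem \ref{C1images}) is the paper's, but the argument as stated has a genuine gap in the order of choices. Theorem \ref{C1images} only gives $\dim_\H g\nu > \dim_\H \pi\mu - \varepsilon$ for the particular $\pi$ you feed it; to arrive at $\min\{1,\dim_\H\mu\}-\varepsilon$ you must first select $\pi$ from the open, dense, full-measure set $\mathcal{U}_\varepsilon$ of almost-dimension-conserving projections supplied by Theorem \ref{EEE}, and only afterwards find a pair of points of $K$ whose direction is $\delta$-close to that $\pi$. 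You do the opposite: you fix a ``generic'' pair $(x_0,y_0)$ and use the projection in its direction. A generic pair does not suffice, because the push-forward of $\mu\times\mu$ under the direction map can be singular and may charge the exceptional (closed, nowhere dense, Lebesgue-null) set of projections, in which case $\dim_\H\pi\mu$ can be far below $\min\{1,\dim_\H\mu\}$. In the conical setting the correct arrangement, which is the paper's, is: choose $\pi\in\mathcal{U}_\varepsilon\cap\operatorname{int}(C)$ (nonempty since $\mathcal{U}_\varepsilon$ is open and dense and $\operatorname{int}(C)\neq\emptyset$), then find $x,y\in K$ whose direction (after accounting for the local rotation coming from $D_{x_0}S$) lies in $C$ and is $\delta$-close to $\pi$, and finally shrink $r$ so that all directions from $x$ to points of $B(y,r)$ remain in $C$; the rest runs as in the proof of Theorem \ref{FFS+dist}.

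The second problem is your mechanism for producing directionally constrained pairs. The paper gets these immediately from Orponen's observation, already used in the proof of Theorem \ref{FFS+dist}: a set with $\mathcal{H}^1>0$ is either contained in a rectifiable curve or has dense direction set, so the hypotheses of Theorem \ref{conicaldist} force $\operatorname{dir}(K)$ (and, after localising at a suitable $x_0$, $\operatorname{dir}(K\cap B(x_0,R))$) to be dense in $S^1$, which yields pairs in any prescribed open arc. Your micromeasure substitute does not deliver this. First, concluding that ``$K$ is $1$-rectifiable'' would not contradict the hypothesis, which only says $K$ is not contained in a \emph{single} rectifiable curve; moreover the tangent-measure characterisations of rectifiability you appeal to require density assumptions that an arbitrary CP-chain-generating $\mu$ need not satisfy, and $b$-adic micromeasures are not tangent measures. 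Second, and more seriously, the ``interpolation'' step is where all the content lies and it is unsupported: three non-collinear points in the support of a micromeasure give two distinct limiting directions, not actual pairs of points of $K$ realising directions in an arbitrary prescribed arc, and CP-chain ergodicity has no rotational equivariance that would spread directions around the circle. Finally, aiming for a positive $\mu\times\mu$-measure set of good pairs is more than is needed: a single pair $x,y\in K=\operatorname{supp}\mu$ with the right direction suffices, since one then restricts $\mu$ to $B(y,r)$, which automatically has positive mass.
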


We will prove Theorem \ref{conicaldist} in Section \ref{conicaldistproof}.  Following the proof of Theorem \ref{distancesetsconformal}, this yields the following corollary.

\begin{cor}
Consider a conformal system or a system of similarities in the plane and a transitive subshift of finite type $\Sigma_A$ such that $\dim_{\text{\emph{H}}} F_A >1$.  Then $\dim_{\text{\emph{H}}} D_{C} \big(F_A  \big) =1$ for any $C \subseteq S^1$ with non-empty interior.
\end{cor}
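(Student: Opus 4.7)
The plan is to run through the proof of Theorem~\ref{distancesetsconformal} essentially verbatim, with Theorem~\ref{conicaldist} substituted for Theorem~\ref{FFS+dist}. Fix $\varepsilon>0$ small enough that $\dim_\H F_A - \varepsilon > 1$ and apply Proposition~\ref{approxfromwithin} to obtain a full shift $\Sigma_\varepsilon$ over a finite alphabet of restrictions of words from $\Sigma_A$, such that $\Pi(\Sigma_\varepsilon) \subseteq F_A$, the associated sub-system satisfies the strong separation property, and $\dim_\H \Pi(\Sigma_\varepsilon) > 1$. On this separated subsystem choose a Gibbs measure $\mu$ (for instance the measure of maximal dimension) with $\dim_\H \mu > 1$. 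Then invoke Theorem~\ref{symbtogeomCP} (or Theorem~\ref{symbtogeomCP2} in the similarity case) to produce a conformal (resp.\ similarity) map $S$ and a measure $\mu_0 \equiv \mu_i$ for some $i \in \mathcal{I}$, such that
\[
\nu \ := \ \mu_0 \circ S^{-1}
\]
generates an ergodic CP-chain of dimension at least $\dim_\H \mu > 1$.

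Next I would verify the hypotheses of Theorem~\ref{conicaldist} for $\nu$. Writing $K = \text{supp}(\nu) = S(\text{supp}(\mu_0))$, the fact that $S$ is bilipschitz on the compact support of $\mu_0$ gives $\dim_\H K = \dim_\H \text{supp}(\mu_0) \geq \dim_\H \mu_0 = \dim_\H \mu > 1$. From $\dim_\H K > 1$ we immediately deduce $\mathcal{H}^1(K) = \infty > 0$, and also that $K$ cannot be contained in any $1$-rectifiable curve, since any such curve has Hausdorff dimension at most $1$. Applying Theorem~\ref{conicaldist} to $\nu$ with the conformal map $S^{-1}$ then yields
\[
\dim_\H D_C\!\left( S^{-1}(K) \right) \ \geq \ \min\{1, \dim_\H \nu\} \ = \ 1.
\]
But $S^{-1}(K) = \text{supp}(\mu_0) \subseteq F_A$, so $D_C(\text{supp}(\mu_0)) \subseteq D_C(F_A)$, whence $\dim_\H D_C(F_A) \geq 1$. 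The reverse inequality is immediate since $D_C(F_A) \subseteq \mathbb{R}$.

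The crux of the argument is the inversion step: Theorem~\ref{symbtogeomCP} yields a CP-chain only for the conformal image $\nu$ and not for $\mu_0$ itself, so one must apply Theorem~\ref{conicaldist} to $\nu$ and then undo the distortion via $S^{-1}$ in order to land back inside $F_A$. This is precisely the reason Theorem~\ref{conicaldist} was formulated with an arbitrary conformal map, and it mirrors the manoeuvre already used in the proof of Theorem~\ref{distancesetsconformal}. The only genuinely new hypothesis of Theorem~\ref{conicaldist} relative to Theorem~\ref{FFSdist}, namely non-containment in a $1$-rectifiable curve, comes for free from the strict inequality $\dim_\H F_A > 1$, so there is no essential obstacle beyond careful bookkeeping. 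The assumption that $C$ has non-empty interior is carried through the corollary unchanged, and the argument applies uniformly to both conformal systems and systems of similarities in the plane.
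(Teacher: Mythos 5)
Your proposal is correct and follows essentially the same route as the paper: the paper proves the corollary precisely by re-running the proof of Theorem \ref{distancesetsconformal} (approximate from within via Proposition \ref{approxfromwithin}, take a full-dimensional Gibbs measure, pass to a conformal image generating an ergodic CP-chain via Theorem \ref{symbtogeomCP}, then undo the distortion with $S^{-1}$), with Theorem \ref{conicaldist} in place of Theorem \ref{FFS+dist}. Your explicit verification that $\dim_\H \operatorname{supp}(\nu) > 1$ rules out containment in a $1$-rectifiable curve and gives $\mathcal{H}^1>0$ is exactly the bookkeeping the paper leaves implicit.
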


\subsubsection{Projections}

In order to obtain results for \emph{all} projections rather than \emph{almost all}, one often needs another assumption guaranteeing a certain homogeneity in the space of projections.  Following \cite{HochmanShmerkin} we now state the version of this extra assumption which we need in our context.
\\ \\
\emph{\textbf{Minimality assumption:} The set $F_A$ corresponding to a subshift of finite type $\Sigma_A$ and a system of similarities satisfies the minimality assumption for $k \in \{1, \dots, d-1\}$ if for all $\pi \in \Pi_{d,k}$ the set
\[
\big\{\pi O(S_{\alpha \vert_k}) \ : \  \alpha  \in \Sigma_A, \ k \in \mathbb{N} \big\}
\]
is dense in $\Pi_{d,k}$, where $O(S_{\alpha \vert_k})$ is the orthogonal part of the map $S_{\alpha \vert_k}$.}
\\ \\
We note that this reduces to the minimality assumption in \cite{HochmanShmerkin} in the case of a full shift however, unlike in the full shift case, there is no useful group action induced on $\Pi_{d,k}$ by the orthogonal parts of the maps in the defining system.

\begin{thm} \label{projectionthm}
Consider a system of similarities and a transitive subshift of finite type $\Sigma_A$ satisfying the strong separation property.  Also assume that $F_A$ satisfies the minimality assumption for some $k <d$ and let $\mu$ be a Gibbs measure for $F_A$. Then for \textbf{all} orthogonal projections $\pi \in \Pi_{d,k}$,
\[
\dim_{\text{\emph{H}}} \pi \mu  \ =  \ \min \big\{ k, \, \dim_{\text{\emph{H}}} \mu \big\}
\]
and
\[
\dim_{\text{\emph{H}}} \pi F_A \ = \ \min \big\{ k, \, \dim_{\text{\emph{H}}} F_A \big\}.
\]
\end{thm}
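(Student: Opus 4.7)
The strategy is the standard ``almost-every-to-every'' argument in the spirit of \cite{HochmanShmerkin}: use the CP-chain machinery to bound $\dim_\H \pi\mu$ on an open dense set of directions, then propagate to \emph{every} direction using the minimality assumption, \emph{without} any group action on $\Pi_{d,k}$.

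First, apply Theorem~\ref{symbtogeomCP2} to produce a similarity $S$ on $[0,1]^d$ with orthogonal part $O=O(S)$ and a measure $\mu_0\equiv\mu_i$ (for some $i\in\mathcal I$) such that $\tilde\mu:=\mu_0\circ S^{-1}$ generates an ergodic CP-chain of dimension at least $\dim_\H\mu$. For any $\varepsilon>0$, Theorem~\ref{EEE} then furnishes an open dense $\mathcal U_\varepsilon\subset\Pi_{d,k}$ on which $\dim_\H\pi'\tilde\mu>\min\{k,\dim_\H\mu\}-\varepsilon$. Because $S$ is a similarity, $\pi'\tilde\mu$ is the similarity image in $\mathbb R^k$ of $(\pi' O)\mu_0$, so $\dim_\H(\pi' O)\mu_0$ obeys the same bound. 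Right multiplication by the fixed orthogonal $O$ is a homeomorphism of $\Pi_{d,k}$, so $V_\varepsilon:=\{\pi' O:\pi'\in\mathcal U_\varepsilon\}$ is open dense and
\[
\dim_\H\pi''\mu_0 \ >\ \min\{k,\dim_\H\mu\}-\varepsilon \qquad (\pi''\in V_\varepsilon).
\]

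Now fix any $\pi\in\Pi_{d,k}$. The Gibbs inequality~\eqref{gibbsbowen} applied to concatenations $[\alpha\vert_k\mathbf j]$, combined with strong separation and the identification $\mu=\mu_{\text{sym}}\circ\Pi^{-1}$, shows that for any admissible $\alpha\vert_k$ with $A_{\alpha_{k-1},i}=1$, the restriction $\mu|_C$ with $C:=S_{\alpha\vert_k}(F_A^i)$ is boundedly equivalent to $\mu_i\circ S_{\alpha\vert_k}^{-1}$, with distortion constants independent of $\alpha,k$; equivalent measures share the same projected Hausdorff dimensions, and $\pi S_{\alpha\vert_k}$ is a similarity image of $\pi\,O(S_{\alpha\vert_k})$ in $\mathbb R^k$, yielding
\[
\dim_\H\pi\mu \ \ge\ \dim_\H \pi(\mu|_C) \ =\ \dim_\H\bigl(\pi\,O(S_{\alpha\vert_k})\bigr)\mu_0.
\]
By minimality, the orbit $\{\pi\,O(S_{\alpha\vert_k}):\alpha\in\Sigma_A,\,k\in\mathbb N\}$ is dense in $\Pi_{d,k}$, and transitivity of $\Sigma_A$ allows one to append a bounded connecting suffix so that $A_{\alpha_{k-1},i}=1$ without destroying density (the bounded orthogonal contribution of the suffix is absorbed by a second homeomorphism). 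Choosing such an $\alpha\vert_k$ with $\pi\,O(S_{\alpha\vert_k})\in V_\varepsilon$ gives $\dim_\H\pi\mu>\min\{k,\dim_\H\mu\}-\varepsilon$; letting $\varepsilon\to 0$ and combining with the trivial $\dim_\H\pi\mu\le\min\{k,\dim_\H\mu\}$ settles the measure statement.

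The set statement follows by specialising $\mu$ to the equilibrium state for the geometric potential at the parameter $t=\dim_\H F_A$: this is a Gibbs measure on $\Sigma_A$ with $\dim_\H\mu=\dim_\H F_A$ by Bowen's formula for graph-directed similarity systems with the strong separation property (see \cite[Chapter 9]{Falconer}), giving $\dim_\H\pi F_A\ge\dim_\H\pi\mu=\min\{k,\dim_\H F_A\}$ and the opposite inequality for free. The principal technical hurdle is the uniform Gibbs comparison in the middle paragraph: establishing bounded equivalence of $\mu|_C$ and $\mu_i\circ S_{\alpha\vert_k}^{-1}$ with constants independent of $\alpha$ and $k$. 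This is an SFT analogue of the quasi-Bernoulli property and relies crucially on the strong separation property to make the symbolic-to-geometric dictionary unambiguous; once in place it is the workhorse that transports the CP-chain dimension bound on $\mu_0$ to a bound on $\mu$ itself.
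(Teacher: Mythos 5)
The central transfer step fails as written. For the notion of dimension used throughout the paper, $\dim_\H\nu=\inf\{\dim_\H E:\nu(E)>0\}$, restricting a measure can only \emph{increase} its dimension: since $\pi(\mu\vert_C)\le\pi\mu$ as measures, any set charged by $\pi(\mu\vert_C)$ is charged by $\pi\mu$, so in fact $\dim_\H\pi\mu\le\dim_\H\pi(\mu\vert_C)$ --- the opposite of the inequality $\dim_\H\pi\mu\ge\dim_\H\pi(\mu\vert_C)$ on which your argument pivots. Equivalently, $\dim_\H\pi\mu$ is the \emph{minimum} of $\dim_\H\pi(\mu\vert_{C_j})$ over the pieces of any finite decomposition of $\mu$ (for instance into the cylinders of a fixed level), so exhibiting a single cylinder $C=S_{\alpha\vert_k}(F_A^i)$ whose projection under $\pi$ has large dimension gives no lower bound at all on $\dim_\H\pi\mu$: the other cylinders, whose associated directions $\pi O(S_{\mathbf w})$ need not lie in $V_\varepsilon$, could a priori project badly. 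This is exactly the difficulty the paper's proof is organised around: using transitivity and the Besicovitch density point theorem (each suitably mapped $\mu_j$ generates the \emph{same} ergodic CP-chain) it manufactures good sets $\mathcal U^j_\varepsilon$ for \emph{every} first-level measure and intersects them into one open dense set $\mathcal U^{all}_\varepsilon$ good for all $\mu_j$ simultaneously; it then transfers to an arbitrary $\pi$ via the asserted invariance $\dim_\H(\pi O(S_{\alpha\vert_k}))\mu_i=\dim_\H\pi\mu_i$, and only at the very end passes to $\mu$ by taking the minimum over $i\in\mathcal I$. Your proposal contains no analogue of either the simultaneous good set or this invariance step, and the single-restriction shortcut cannot substitute for them.

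The peripheral ingredients are fine or easily repaired and agree with the paper's toolkit: the identification of $\pi'\tilde\mu$ with a similarity image of $(\pi'O)\mu_0$ and the fact that right multiplication by a fixed orthogonal map is a homeomorphism (the paper uses it as an isometry); the similarity analogue of Lemma~\ref{gibbsmeasures2}, whose proof is purely symbolic; the connecting-suffix device, which contributes only finitely many orthogonal parts so that a short Baire-type selection shows the modified orbit still meets an open dense set; and the use of a Gibbs measure of full dimension for the set statement. But as it stands your argument only shows that \emph{some} piece of $\pi\mu$ projects with dimension exceeding $\min\{k,\dim_\H\mu\}-\varepsilon$, which for the lower Hausdorff dimension is strictly weaker than the claim; to close the gap you must establish the bound for every first-level piece, which essentially amounts to redoing the paper's construction of $\mathcal U^{all}_\varepsilon$ and its transfer argument.
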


We will prove Theorem \ref{projectionthm} in Section \ref{projectionthmproof}.  We note that this projection result applies to graph-directed self-similar sets and measures satisfying the strong separation property, thus generalising \cite[Theorem 1.6]{HochmanShmerkin} to the graph-directed setting.
\\ \\
It would be desirable to remove the reliance on the strong separation property from Theorem \ref{projectionthm}.  Concerning dimensions of projections of \emph{measures}, removing the separation property is challenging.  Progress on this problem was made by Falconer and Jin \cite{FalconerJin} and it may be possible to apply their ideas in our setting.  Concerning dimensions of projections of \emph{sets}, the difficulty in removing the separation property is that when one applies Proposition \ref{approxfromwithin}, one cannot guarantee that the subsystem satisfies the minimality assumption even if the original system did.  In the case of self-similar sets modelled by a full shift this was overcome by Farkas \cite{Farkas}.  In $\mathbb{R}^2$ it is straightforward as only one irrational rotation is needed, but in higher dimensions Farkas relied on a careful application of Kronecker's simultaneous approximation theorem.  Extending this approach to our setting may be possible, but the lack of an induced group action could cause problems.  For example, consider a system of similarities in the plane consisting of three maps mapping the unit ball into three pairwise disjoint sub-balls.  Suppose $S_0$ rotates by an irrational angle $\alpha$, $S_1$ rotates by $-\alpha$ and $S_2$ is a homothety.  Now consider the subshift of finite type corresponding to the matrix
 \[
A \ = \ \left ( \begin{array}{ccc}
0 & 1 & 0 \\ 
1 & 0 & 1 \\ 
1 & 0 & 1 \\ 
\end{array} \right ) .
\]
It is easily seen that this subshift is mixing (and so transitive), but that it does not satisfy the minimality condition despite the presence of irrational rotations at the first level.
\\ \\
Obtaining sharpenings of the classical projection theorems for self-conformal sets and measures is more challenging. Theorem \ref{symbtogeomCP} and Theorem \ref{EEE} combine to yield information about the projections of the conformal image $S(\mu_0)$.  In particular, for any $\varepsilon>0$ there is an open dense set of projections which yield dimension within $\varepsilon$ of optimal.  We believe careful applications of this and Theorem \ref{C1images} would allow this to be transferred back to the original measure $\mu$, but we do not include the details.  This would also yield, for example, that the set of optimal projections is residual (contains a dense $G_\delta$ set) in $\Pi_{2,1}$.  The next challenge in proving an `all projections' result is to introduce an appropriate minimality condition. A plausible such condition would be to replace $\pi O(S_{\alpha \vert_k})$ with the (right) action on $\pi$ by the Jacobian derivative $J$ at the fixed point of the map $S_{\alpha \vert_k}$ in our minimality condition stated above.  This certainly reduces to our condition for systems of similarities.  The difficulty is in relating the dimension of $\pi \mu$ with $(\pi J)\mu$ because, unlike in the linear setting, the measures $\pi \mu$ and $(\pi J)\mu \circ S_{\alpha \vert_k}^{-1}$ are not just scaled copies of each other.
\\ \\
In certain situations one can say more.  For example, if the defining system includes a map which is simply an irrational rotation (and contraction) around the origin, then a minimality condition can be satisfied using this map alone without any nonlinear complications.  In some sense this is a very restrictive condition because it relies on the conformal system having a map which is a strict similarity.  However, for our main conformally generated examples in this paper, limit sets of Schottky groups and Julia sets of rational maps, the sets and measures naturally lie in the Riemann sphere and so when shifting to the complex plane (via stereographic projection) we are at liberty to choose which points play the role of $\infty$ and $0$.  For example, if we have a non-parabolic M\"obius map in the defining system, then it is conjugate to a map which fixes zero and $\infty$ and so for a specific choice of coordinates this map becomes a rotation around the origin.   In some sense this discussion indicates that considering orthogonal projections of limit sets of Schottky groups and Julia sets is not particularly natural because of the dependency on the choice of coordinates.

\section{Proofs}

\subsection{An estimate for minimeasures}

In this section we prove a technical lemma which roughly speaking says that  when you restrict a Gibbs measure for a conformal system to a cylinder and normalise, you get a measure (uniformly) equivalent to the obvious conformal image of the appropriate first level restriction.  This will be important when proving Theorem \ref{symbtogeom} in the subsequent section.

\newpage

\begin{lma}\label{gibbsmeasures2}
Consider a conformal system and a subshift of finite type $\Sigma_A$ satisfying the strong separation property and let $\mu$ be a Gibbs measure for $F_A$. Then there exists a uniform constant $C >0$ depending only on the potential $\phi$ such that for all $\alpha \in \Sigma_A$, $m \in \mathbb{N}$ we have
\[
C^{-1} \,\mu\vert_{S_{\alpha\vert_m}(F_A^{\alpha_m})}(E)  \ \leq \  \mu(S_{\alpha\vert_m}(F_A^{\alpha_m})) \, \mu\vert_{F_A^{\alpha_m}} \circ S_{\alpha\vert_m}^{-1}(E)  \ \leq \   C \,\mu\vert_{S_{\alpha\vert_m}(F_A^{\alpha_m})}(E)
\]
for all Borel sets $E \subseteq \mathbb{C}$.
\end{lma}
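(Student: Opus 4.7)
The plan is to push the inequality through the coding map $\Pi$ to a symbolic statement about cylinder measures of $\mu_{\mathrm{sym}}$ on $\Sigma_A$, verify it at the cylinder level using the Gibbs property together with summable variation of $\phi$, and then extend to all Borel sets by a Radon--Nikodym argument.

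First I would use strong separation to identify the geometric cylinder $S_{\alpha|_m}(F_A^{\alpha_m})$ with the symbolic cylinder $\Pi([\alpha_0 \cdots \alpha_m] \cap \Sigma_A)$ of length $m+1$, so that $\mu(S_{\alpha|_m}(F_A^{\alpha_m})) = \mu_{\mathrm{sym}}([\alpha_0 \cdots \alpha_m])$, and similarly for the other cylinders appearing below. Both sides of the claimed inequality are supported on $S_{\alpha|_m}(F_A^{\alpha_m})$, so it suffices to work with test sets contained in it, and in fact with those in the natural generating $\pi$-system of sub-cylinders $E = S_{\alpha|_m \cdot \beta|_n}(F_A^{\beta_n})$, where $\beta \in \Sigma_A$ with $\beta_0 = \alpha_m$. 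For such $E$, a direct unpacking using the injectivity of $S_{\alpha|_m}$ and strong separation gives
\[
\mu(E) = \mu_{\mathrm{sym}}([\alpha_0 \cdots \alpha_{m-1} \beta_0 \cdots \beta_n]), \qquad \mu\bigl(F_A^{\alpha_m} \cap S_{\alpha|_m}^{-1}(E)\bigr) = \mu_{\mathrm{sym}}([\beta_0 \cdots \beta_n]),
\]
so the cylinder version of the inequality becomes a comparison of $\mu_{\mathrm{sym}}([\alpha_0 \cdots \alpha_{m-1} \beta_0 \cdots \beta_n])$ with $\mu_{\mathrm{sym}}([\alpha_0 \cdots \alpha_m]) \cdot \mu_{\mathrm{sym}}([\beta_0 \cdots \beta_n])$ up to universal constants.

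The core step is then to apply the Bowen--Gibbs bounds \eqref{gibbsbowen} to all three cylinders and show that the resulting ratio of exponentials is uniformly controlled. Setting $\gamma = (\alpha_0, \ldots, \alpha_{m-1}, \beta_0, \beta_1, \ldots) \in \Sigma_A$ (admissible because $\beta_0 = \alpha_m$), the exponent collapses by a telescoping of Birkhoff sums to
\[
\sum_{l=0}^{m-1} \bigl[\phi(\sigma^l \gamma) - \phi(\sigma^l \alpha)\bigr] \; - \; \phi(\sigma^m \alpha) \; + \; P(\phi),
\]
since the contributions from $\gamma$ at indices $l \geq m$ cancel exactly against the $\phi^{n+1}(\beta)$ sums (using $\sigma^m \gamma = \beta$). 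For $l = 0, \ldots, m-1$ the sequences $\sigma^l \gamma$ and $\sigma^l \alpha$ agree on the first $m-l+1$ coordinates, so the sum is dominated by $\sum_{k \geq 2} \mathrm{var}_k(\phi) < \infty$ by summable variations; the remaining two terms are bounded by $\|\phi\|_\infty + |P(\phi)|$. This yields a uniform constant $C$ depending only on $\phi$.

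Finally, to promote the cylinder estimate to all Borel sets, I would invoke a density argument. The sub-cylinder partitions of $S_{\alpha|_m}(F_A^{\alpha_m})$ at increasing depth form a refining filtration whose join is the Borel $\sigma$-algebra, since sub-cylinder diameters tend to $0$ by contraction of the $S_i$. The measures $\nu_1 = \mu|_{S_{\alpha|_m}(F_A^{\alpha_m})}$ and $\nu_2 = \mu(S_{\alpha|_m}(F_A^{\alpha_m})) \cdot \mu|_{F_A^{\alpha_m}} \circ S_{\alpha|_m}^{-1}$ are mutually absolutely continuous, and by the martingale convergence theorem their Radon--Nikodym derivative equals $\nu_2$-a.e.\ the limit of the sub-cylinder ratios $\nu_1(C_n)/\nu_2(C_n)$, which all lie in $[C^{-1}, C]$ by the previous step; hence the density does as well and the bound extends to every Borel set. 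The main obstacle is the Birkhoff-sum bookkeeping: the asymmetric cylinder lengths $m{+}1$, $n{+}1$, $m{+}n{+}1$ together with the surviving $+P(\phi)$ must be reconciled precisely by the telescoping, and this is exactly where the summable variation hypothesis on $\phi$ is used to close the argument.
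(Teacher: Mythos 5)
Your proposal is correct and follows essentially the same route as the paper: reduce via strong separation to a quasi-independence estimate for symbolic cylinders, apply the Gibbs bounds \eqref{gibbsbowen} to the three cylinders, and control the exponent by summable variations, then pass from cylinders to all Borel sets (the paper does this last step tersely, you via a martingale/differentiation argument). The only difference is bookkeeping at the junction symbol: your parametrization leaves the harmless residual $-\phi(\sigma^m\alpha)+P(\phi)$, bounded by $\|\phi\|_\infty+|P(\phi)|$, whereas the paper's choice of cylinder lengths makes the pressure terms cancel exactly.
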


\begin{proof}
Since the strong separation property is satisfied, it suffices to prove the symbolic version of the lemma.  Since the cylinders generate the Borel sets in $\Sigma_A$, it suffices to show that for $\alpha \in \Sigma_A$, $m \in \mathbb{N}$, $\beta \in \Sigma$ such that $\alpha_{m+1}\beta \in \Sigma_A$, and $n \in \mathbb{N}$ the quantity
\[
\frac{\mu\big([\alpha \vert_{m+1} \beta\vert_n]\big)}{\mu\big([\alpha \vert_{m+1} ]\big)\mu\big([ \beta\vert_n]\big)}
\]
is uniformly bounded away from zero and $\infty$ independently of $\alpha$, $m$, $\beta$ and $n$. This follows from a more or less standard Gibbs measure argument, but we include the details for completeness.  By (\ref{gibbsbowen}), we have
\begin{eqnarray*}
\frac{\mu\big([\alpha \vert_{m+1} \beta\vert_n]\big)}{\mu\big([\alpha \vert_{m+1} ]\big)\mu\big([ \beta\vert_n]\big)}
& \leq & \frac{C_2 \, \exp \Big(\phi^{m+n+1}\big(\alpha \vert_{m+1} \beta \big) \, - \, (m+n+1)P(\phi) \Big)}{C_1 \, \exp \Big(\phi^{m+1}(\alpha) \, - \, (m+1)P(\phi) \Big) \ C_1 \, \exp \Big( \phi^{n}(\beta) \, - \, nP(\phi) \Big)} \\ \\
& = & \frac{C_2}{C_1^2} \ \exp \Bigg(\sum_{l=0}^{m+n} \phi\big(\sigma^{l}(\alpha \vert_{m+1} \beta) \big) - \sum_{l=0}^{m} \phi\big(\sigma^{l}(\alpha) \big) - \sum_{l=0}^{n-1}\phi\big(\sigma^{l}(\beta) \big)\Bigg) \\ \\
& = & \frac{C_2}{C_1^2} \ \exp \Bigg(\sum_{l=0}^{m} \phi\big(\sigma^{l}(\alpha \vert_{m+1} \beta) \big) - \sum_{l=0}^{m} \phi\big(\sigma^{l}(\alpha) \big)\Bigg) \\ \\
& \leq &  \frac{C_2}{C_1^2} \ \exp \Bigg(\sum_{l=0}^{m}  \bigg\lvert  \phi\big(\sigma^{l}(\alpha \vert_{m+1} \beta) \big)  \, - \, \phi\big(\sigma^{l}(\alpha) \big) \bigg\rvert \Bigg) \\ \\
& \leq &  \frac{C_2}{C_1^2} \ \exp \Bigg(\sum_{l=0}^{m} \text{var}_{m+1-l}(\phi)\Bigg) \\ \\
& \leq &  \frac{C_2}{C_1^2} \ \exp \Bigg(\sum_{l=0}^{\infty} \text{var}_{l}(\phi)\Bigg) \\ \\
& <& \infty.
\end{eqnarray*}
This establishes the required upper bound.  Since $\alpha_{m+1}\beta \in \Sigma_A$, a similar argument yields the required lower bound  
\[
\frac{\mu\big([\alpha \vert_{m+1} \beta\vert_n]\big)}{\mu\big([\alpha \vert_{m+1} ]\big)\mu\big([ \beta\vert_n]\big)} \  \geq  \  \frac{C_1}{C_2^2} \ \exp \Bigg( - \sum_{l=0}^{\infty} \text{var}_{l}(\phi)\Bigg)  \ > \ 0,
\]
which completes the proof.
\end{proof}

\subsection{Proof of Theorem \ref{symbtogeom}}  \label{symbtogeomproof}

Consider a conformal system and a subshift of finite type $\Sigma_A$ satisfying the strong separation property and let $\mu$ be a Gibbs measure for $F_A$.  Let $\nu \in  \text{Mini}(\mu)$ which we assume is not supported on the boundary of $U$.  We will assume for simplicity that we are using the 2-adic partition operator.  The general $b$-adic case is similar.  Choose $x \in \text{supp}(\nu) \cap U$ and let
\[
r \  = \  \inf_{y \in  \partial U} \lvert x-y \rvert \ > \ 0.
\]
Since $\nu$ is a minimeasure, there exists some (closed) dyadic square $B$ with sidelengths $2^{-k}$ for some $k \in \mathbb{N}$ for which
\[
\nu \ = \  \mu^B \  = \  \frac{1}{\mu(B)} \, \mu\lvert_B \, \circ \, T_B^{-1}.
\]
Recall that $T_B$ is the unique orientation preserving onto similarity mapping $B$ to $\overline{U}$. Observe that $B(T_B^{-1}(x), 2^{-k} r) \subseteq B$ and that $T_B^{-1}(x) = \Pi(\alpha) \in F_A$ for some $\alpha \in \Sigma_A$.  Choose the unique $m \in \mathbb{N}$ satisfying
\[
\text{Lip}^+\big(S_{\alpha\vert_m}\big) \, < \,  2^{-k}r/\sqrt{2} 
\]
but
\[
\text{Lip}^+\big(S_{\alpha\vert_{m-1}}\big) \, \geq \, 2^{-k} r/\sqrt{2}
\]
and note that
\[
S_{\alpha\vert_{m}}\big( \overline{U} \big) \, \subseteq  \, B(T_B^{-1}(x),2^{-k} r) \, \subseteq \, B.
\]
The bounded distortion property gives
\[
\text{Lip}^-\big(S_{\alpha\vert_m}\big) \ \geq \   \text{Lip}^+\big(S_{\alpha\vert_m}\big) / L \ \geq \  \frac{2^{-k} r}{L \sqrt{2}}.
\] 
Let $S = T_B \circ S_{\alpha\vert_m}$ and $i = \alpha_m \in \mathcal{I}$.  Observe that $S\big(F_A^i\big) \subseteq \text{supp}(\nu)$ and, moreover,  Lemma \ref{gibbsmeasures2} implies
\begin{eqnarray*}
\nu\vert_{S(F_A^i)} &=& \frac{1}{\mu(B)} \, \mu \vert_{S_{\alpha\vert_m}(F_A^i)} \circ T_B^{-1} \\ \\
&\equiv & \frac{\mu\big(S_{\alpha\vert_m} (F_A^i) \big)}{\mu(B)} \  \mu \vert_{F_A^i} \circ S_{\alpha\vert_m} ^{-1} \circ T_B^{-1} \\ \\
&\equiv &  \mu_i \circ S^{-1}.
\end{eqnarray*}
This combined with the fact that $S$ is conformal proves the required result for minimeasures. We will now turn to the proof for micromeasures $\nu$, which is conceptually more difficult but can be circumvented by a compactness argument.  Note that for the construction above
\begin{equation} \label{bddaway0}
\frac{ r}{L \sqrt{2}} \ \leq \   \text{Lip}^-\big(S\big) \ \leq \ \text{Lip}^+\big(S\big) \ \leq \  \frac{ r}{\sqrt{2}}
\end{equation}
and
\begin{equation} \label{bddaway}
\frac{\mu\big(S_{\alpha\vert_m} (F_A^i) \big)}{\mu(B)}  \ \geq \ p(r) \ > \ 0
\end{equation}
for some uniform weight $p(r)>0$ depending only on $r$.  This can be seen since $\mu$ is a Gibbs measure for a conformal system satisfying the strong separation condition and so is doubling.  Let $\nu \in  \text{Micro}(\mu)$ which we assume is not supported on the boundary of $U$. Choose a point $z \in \text{supp}(\nu) \cap U$ and let
\[
r(z) \  = \  \frac{1}{2} \, \inf_{y \in  \partial U} \lvert z-y \rvert \ > \ 0.
\]
It follows that there exists a sequence of points $x_l \in \overline{U}$ and a sequence of minimeasures $\nu_l \in \text{Mini}(\mu)$ satisfying $x_l \in B(z,r(z)) \cap \text{supp}(\nu_l)$ for all $l \in \mathbb{N}$, $x_l \to z$ and $\nu_l \to_{w^*} \nu$.  Repeat the above argument for each minimeasure $\nu_l$, observing that we can choose $x=x_l$ and then for each $l$, the value $r$ is at least $r(z)$.  This means we can find a sequence $\{(S_l, i_l,\mu_{0,l}) \}_{l \in \mathbb{N}}$ satisfying the following properties:
\begin{itemize}
\item[(1)] the $S_l$ are conformal maps on $U$ and by (\ref{bddaway0}) the Lipschitz constants of the $S_l$ are bounded uniformly away from 0 and 1 independent of $l$.
 \item[(2)] for each $l \in \mathbb{N}$, $i_l \in \mathcal{I}$.
 \item[(3)] the $\mu_{0,l}$ are probability measures and for each $l \in \mathbb{N}$, $\mu_{0,l}$ is equivalent to $\mu_{i_l}$ with Radon-Nikodym derivative bounded uniformly away from $0$ and $\infty$ independent of $l$.  This independence from $l$ comes from Lemma \ref{gibbsmeasures2} and (\ref{bddaway}).
\item[(4)] for each $l \in \mathbb{N}$, $\nu_l\vert_{S_l(F_A^{i_l})} \ = \ \mu_{0,l} \circ S_l^{-1}$.
\end{itemize}
A combination of Tychonoff's Theorem, The Arzel\'a-Ascoli Theorem and Prokhorov's Theorem implies that we may extract a subsequence of the triples $(S_l, i_l,\mu_{0,l})$ such that the $S_l$s converge uniformly, the $i_l$s eventually become constantly equal to some $i \in \mathcal{I}$ and the $\mu_{0,l}$s converge weakly to a Borel measure $\mu_{0}$ equivalent to $\mu_{i}$.  Since uniform limits of complex analytic maps are complex analytic and the uniform bounds on the Lipschitz constants of the $S_l$ guarantees that the uniform limit has non-vanishing derivative, the limit $S$ is conformal.  Recall that a map is conformal on an open domain if and only if it is holomorphic (equivalently analytic) and its derivative is everywhere non-zero on its domain.  Since $\nu_l \to_{w^*} \nu$, it follows that
\[
\nu\vert_{S(F_A^{i})} \ = \ \mu_{0} \circ S^{-1}
\]
which completes the proof.

\subsection{Proof of  Theorem \ref{symbtogeomCP}}  \label{symbtogeomCPproof}

Consider a conformal system and a subshift of finite type $\Sigma_A$ satisfying the strong separation property and let $\mu$ be a Gibbs measure for $F_A$. Theorem 7.10 in \cite{HochmanShmerkin} implies that there exists an ergodic CP-chain for $\mu$ supported on $ \text{Micro}(\mu)$ of dimension at least $\dim_\H \mu$. Writing $Q$ for the measure component, Theorem 7.7 in \cite{HochmanShmerkin} tells us that $Q$-almost all $\nu \in \text{Micro}(\mu)$ generate this CP-chain.  We now wish to apply Theorem \ref{symbtogeom} to a typical micromeasure, but we do not know \emph{a priori} that typical micromeasures are not supported on the boundary of the cube.  However, a solution to this problem can be found by examining the proof of Theorem 7.10 in \cite{HochmanShmerkin}.  Indeed, by applying a random homothety which does not effect the set of micromeasures one is able to argue that almost surely (with respect to the randomisation of the homothety and the measure component of the resultant CP-chain) the micromeasures satisfy $\nu\big( U \big) = 1$.
 Thus we may fix a distinguished micromeasure $\nu \in \text{Micro}(\mu)$, not supported on the boundary of $\overline{U}$ and which generates the same ergodic CP-chain with measure component $Q$.  Theorem \ref{symbtogeom} now tells us that there exists a conformal map $S$ on $U$ and a measure $\mu_0 \equiv \mu_i$ for some $i \in \mathcal{I}$, such that
\[
\nu\vert_{S(F_A)} \ = \  \mu_0 \circ S^{-1}.
\]
Lemma 7.3 in \cite{HochmanShmerkin} implies that (the normalisation of) this measure generates $Q$, which proves the result.

\subsection{Proof of Proposition \ref{approxfromwithin}}  \label{approxfromwithinproof}

Consider a conformal system or a system of similarities and let $\Sigma_A$ be a transitive subshift of finite type.  Write $X$ for the compact metric space the system of maps acts on and let
\[
\mathcal{A}_0\big(\Sigma_A, k\big) \ = \ \big\{ \alpha\vert_k: \alpha \in [0]\cap \Sigma_A \big\}
\]
be the set of admissible words of length $k$ in $\Sigma_A$ which begin with the symbol 0. Let
\[
s = \dim_\H  F_A^0 = \dim_\H F_A 
\]
For $\textbf{\emph{i}} \in \mathcal{I}^k$, choose a ball $B_{\textbf{\emph{i}},k}$ of radius $\text{Lip}^+(S_\textbf{\emph{i}}) \lvert X \rvert$ containing $S_\textbf{\emph{i}}(X)$.  Clearly the collection
\[
\{B_{\textbf{\emph{i}},k}\}_{\textbf{\emph{i}} \in \mathcal{A}_0(\Sigma_A, k)}
\]
forms a cover of $F_A^0$ for all $k \in \mathbb{N}$.  For each $k \in \mathbb{N}$, use the Vitali Covering Lemma to find a subset $\mathcal{D}_0\big(\Sigma_A, k\big) \subseteq \mathcal{A}_0\big(\Sigma_A, k\big)$ such that
\[
\{B_{\textbf{\emph{i}},k}\}_{\textbf{\emph{i}} \in\mathcal{D}_0(\Sigma_A, k)}
\]
is a pairwise disjoint collection of balls and such that
\[
F_A^0 \ \subseteq \ \bigcup_{\textbf{\emph{i}} \in\mathcal{D}_0(\Sigma_A, k)} \, 3 B_{\textbf{\emph{i}},k}
\]
where $3 B_{\textbf{\emph{i}},k}$ is the ball centered at the same point as $B_{\textbf{\emph{i}},k}$, but with three times the radius. Fix $\tau \in \mathcal{I}$ such that $\tau 0$ is an admissible word and for each $\textbf{\emph{i}} \in \mathcal{D}_0\big(\Sigma_A, k\big)$, let $\textbf{\emph{j}}$ be a finite word of minimal length such that $\textbf{\emph{i}}\textbf{\emph{j}}$ is both admissible and ends in the symbol $\tau$. Such a word $\textbf{\emph{j}}$ exists since $\Sigma_A$ is transitive and if more than one choice for $\textbf{\emph{j}}$ exists, then choose one arbitrarily.  Observe that there exists a universal bound $K \in \mathbb{N}$ such that $\lvert \textbf{\emph{j}}\rvert \leq K$ for all $\textbf{\emph{i}}$ and $k$.   Let
\[
\mathcal{D}_0^\tau\big(\Sigma_A, k\big) \ = \ \big\{ \textbf{\emph{i}}\textbf{\emph{j}}: \textbf{\emph{i}} \in \mathcal{D}_0\big(\Sigma_A, k\big) \big\}.
\]
It follows that there is a constant $C>1$ depending only on $K$ and the maps in the original system such that
\begin{equation} \label{givescover}
F_A^0 \ \subseteq \ \bigcup_{\textbf{\emph{i}} \in \mathcal{D}_0^\tau(\Sigma_A, k)} \, C B_{\textbf{\emph{i}},k}
\end{equation}
where $C B_{\textbf{\emph{i}},k}$ is the ball centered at the same point as $B_{\textbf{\emph{i}},k}$, but with radius multiplied by $C$.  Let $\Sigma_k = \Sigma\big(\mathcal{D}_0^\tau\big(\Sigma_A, k\big)\big)$ be the full shift over the alphabet $\mathcal{D}_0^\tau\big(\Sigma_A, k\big)$ which clearly satisfies the strong separation property.  Since $\Sigma_k$ is a \emph{full shift}, the value $t(k)$ defined uniquely by
\[
\sum_{\textbf{\emph{i}} \in  \mathcal{D}_0^\tau(\Sigma_A, k)} \text{Lip}^-(S_\textbf{\emph{i}})^{t(k)} \ = \ 1
\]
is a lower bound for the Hausdorff dimension of $\Pi(\Sigma_k) $, see \cite[Proposition 9.7]{Falconer}.  Let $\varepsilon>0$ and observe that, using the increasingly fine covers of $F_A^0$ given by (\ref{givescover}), we have
\begin{eqnarray*}
\infty \ = \ \mathcal{H}^{s-\varepsilon}\big(F_A^0\big) & \leq &  \lim_{k \to \infty} \sum_{\textbf{\emph{i}} \in\mathcal{D}_0^\tau(\Sigma_A, k)} \Big(2C \,  \text{Lip}^+(S_\textbf{\emph{i}}) \lvert X \rvert \Big)^{s-\varepsilon} \\ \\
& \leq &  (2CL\lvert X \rvert)^{s-\varepsilon} \, \lim_{k \to \infty} \sum_{\textbf{\emph{i}} \in\mathcal{D}_0^\tau(\Sigma_A, k)}  \text{Lip}^-(S_\textbf{\emph{i}})^{s-\varepsilon}.
\end{eqnarray*}
Hence we may choose $k \in \mathbb{N}$ large enough to guarantee that
\[
\sum_{\textbf{\emph{i}} \in\mathcal{D}_0^\tau(\Sigma_A, k)}  \text{Lip}^-(S_\textbf{\emph{i}})^{s-\varepsilon} >1
\]
which implies that $s-\varepsilon <  t(k) \leq  \dim_\H \Pi(\Sigma_k)$ and letting $\Sigma_\varepsilon = \Sigma_k$ and observing that $\Pi(\Sigma_\varepsilon) \subseteq F_A$ completes the proof.

\subsection{Proof of Theorem \ref{FFS+dist}}    \label{FFS+distproof}

Let $\mu$ be a probability measure supported on a compact set $F \subseteq \mathbb{C}$ which generates an ergodic CP-chain, suppose  $\mathcal{H}^1(F) >0$ and let $S$ be a conformal map. The \emph{direction set} of a set $K \subseteq \mathbb{C}$ is defined by
\[
\text{dir}(K) \ = \ \bigg\{\frac{x-y}{\lvert x-y\rvert} : x,y \in K, \, x \neq y \bigg\} \  \subseteq \  S^1.
\]
Orponen \cite{Orponen} observed that a set $K$ with $\mathcal{H}^1(K) >0$ is either contained in a rectifiable curve or has a dense direction set.  If $F$ is contained in a rectifiable curve, then so is $S(F)$.  This, combined with the fact that $\mathcal{H}^1\big(S(F)\big) >0$, implies that $D(S(F))$ contains an interval by a result of Besicovitch and Miller \cite{besmil}, completing the proof in this case.  Now suppose that $\text{dir}(F)$ is dense in $S^1$.  Let $\varepsilon > 0$ and choose $\pi \in \Pi_{2,1}$ which satisfies
\begin{equation}\label{firsteps}
\dim_\H \pi \mu > \min\{ 1, \dim_\H \mu \} - \varepsilon.
\end{equation}
The existence of such a $\pi$ is guaranteed by Theorem \ref{EEE} for example.  Also, let $\delta>0$ depending on $\varepsilon$ be the value given to us by Theorem \ref{C1images} and let $\delta'>0$ be chosen depending on $\delta$. Since $S$ is conformal we may find a point $x_0 \in F$ and $R>0$ sufficiently small, so that
\[
\| S\vert_{B(x_0,R)} - D_{x_0}S \| \leq \delta',
\]
\[
\mathcal{H}^1\big(B(x_0,R) \cap F\big) >0
\]
and $B(x_0,R) \cap F$ is not contained in a rectifiable curve.  It follows from Orponen's observation that $\text{dir}\big(B(x_0,R) \cap F\big)$ is dense in $S^1$.  Thus, identifying $\Pi_{2,1}$ with $S^1$ in the natural way, we may choose two points $x,y \in B(x_0,R/2) \cap F$ such that the direction
\[
\frac{x-y}{\lvert x-y\rvert} \in S^1
\]
determined by $x$ and $y$ is $\delta'$ close to $\pi$.  Now choose $r \in (0,\lvert x-y\rvert/3)$ sufficiently small to ensure that for all $z \in B(y,r)$ we have
\[
\bigg\lvert \frac{1}{\lvert D_{x_0}S \rvert} \lvert S(x) - S(z) \rvert  \, - \, \lvert \pi(z) - \pi(x) \rvert \bigg\rvert \leq \delta'.
\]
Now define a map $g_x: \overline{U} \setminus B(x, \lvert x-y\rvert/3)  \to \mathbb{R}$ by
\[
g_x(z) = \frac{1}{\lvert D_{x_0}S \rvert} \lvert S(z)-S(x) \rvert.
\]
Notice that $g_x$ is $C^1$ and can be extended to a $C^1$ mapping $g$ on the whole of $ \overline{U}$. Since $\delta'$ was chosen to depend on $\delta$, it is readily seen that it can be chosen small enough to guarantee that the derivative of $g$ is sufficiently close to $\pi$ on $B(y,r)$, i.e.
\begin{equation}\label{forg}
\sup_{z \in B(y,r)} \| D_z g - \pi\| < \delta.
\end{equation}
Consider the restricted and normalised measure 
$$\nu = \mu(B(y,r))^{-1}\mu\vert_{B(y,r)}.$$  
It is a consequence of the Besicovitch Density Point Theorem that $\nu$ generates the \textit{same} CP-chain as $\mu$; see for example \cite[Lemma 7.3]{HochmanShmerkin}.   Theorem \ref{C1images} combined with (\ref{forg}) gives us that
\begin{equation}\label{secondeps}
\dim_\H g \nu \geq  \dim_\H \pi \mu - \varepsilon.
\end{equation}
Since $g$ maps $B(y,r) \cap F$ into
\[
 \frac{1}{\lvert D_{x_0}S \rvert} D(S(F)),
\]
we have that $g\nu$ is supported on this set and so
\begin{eqnarray*}
\dim_\H  D(S(F)) \ = \  \dim_\H  \frac{1}{\lvert D_{x_0}S \rvert} D(S(F)) &\geq& \dim_\H g  \nu  \\
&\geq& \dim_\H \pi \mu - \varepsilon  \quad \text{by (\ref{secondeps})} \\
&\geq&\min\{1,\dim_\H \mu\}-2\varepsilon \quad \text{by (\ref{firsteps})}
\end{eqnarray*}
which proves the result since $\varepsilon > 0$ was arbitrary.

\subsection{Proof of Theorem \ref{distancesetsconformal}}    \label{distancesetsconformalproof}

Consider a conformal system or a system of similarities in the plane and let $\Sigma_A$ be a transitive subshift of finite type with $\dim_{\text{{H}}} F_A >1$. Proposition \ref{approxfromwithin} implies that there exists a full shift $\Sigma_0$ over a potentially different (but finite) alphabet made up of a finite collection of restrictions of elements in $\Sigma_A$ such that $\Pi(\Sigma_0) \subseteq F_A$,   $\dim_{\text{{H}}} \Pi(\Sigma_0)>1$ and such that the conformal system corresponding to $\Sigma_0$ satisfies the strong separation property.  Let $\mu$ be a Gibbs measure supported on $\Pi(\Sigma_0)$ with $\dim_\H \mu = \dim_\H \Pi(\Sigma_0)$, which exists by, for example, \cite{GatzourasPeres}.  Theorem \ref{symbtogeomCP} guarantees that there exists a conformal map $S$ and a probability measure $\mu_0 \equiv \mu_i$ for some $i \in \mathcal{I}$, such that $\mu_0 \circ S^{-1}$ generates an ergodic CP-chain. Now since $S^{-1}$ is also a conformal map, Theorem \ref{FFS+dist} implies that
\[
\dim_{\text{{H}}} D\big(  S^{-1}\big(\text{{supp}}(\mu_0 \circ S^{-1}) \big) \big) \, \geq \,  \min \{ 1, \ \dim_{\text{{H}}} \mu_0 \circ S^{-1} \}  \, = \,   \min \{ 1, \ \dim_\H \mu \}  \, =  \, 1.
\]
However,
\[
S^{-1}\big(\text{{supp}}(\mu_0 \circ S^{-1})\big) \, = \,  S^{-1} \big(S\big(\text{{supp}}(\mu_0)\big) \big) \,  \subseteq \,  \text{{supp}}(\mu) \, = \,  \Pi(\Sigma_0) \subseteq F_A
\]
since  $\mu_0 \equiv \mu_i$, which proves that $\dim_{\H} D \big(F_A  \big) =1$ as required.  If we assume the strong separation condition, then $\dim_{\text{{H}}} F_A \geq 1$ is sufficient because we do not need to approximate from within and such sets satisfy $\mathcal{H}^1(F_A )>0$.

\subsection{Proof of Theorem \ref{conicaldist}}    \label{conicaldistproof}

This proof is similar to either the proof of Theorem \ref{distancesetsconformal} given in the Section \ref{FFS+distproof}, or the proof of  \cite[Theorem 1.7]{FergusonFraserSahlsten}, and so the details are omitted.  In the proof of Theorem \ref{distancesetsconformal}, a distinguished $\pi$ is chosen and choosing $r>0$ small enough guarantees that all of the distances considered are realised by directions arbitrarily close to $\pi$.  Since $C$ is assumed to have nonempty interior, the direction set $\text{dir}(\text{supp}(\mu)) $ is dense in $S^1$, and the set $\Pi_\varepsilon$ of `$\varepsilon$-good' projections is open, dense and of full measure, we may choose the distinguished $\pi$ to lie in the intersection of $C$ and $\Pi_\varepsilon$ and find a direction in $\text{dir}(\text{supp}(\mu)) $ in $C$ which is $\delta$ close to this $\pi$.  Provided we choose $r$ small enough to ensure that all directions realised by points in $x$ together with points in $B(y,r)$ also lie in $C$, the rest of the proof proceeds as the proof of Theorem \ref{distancesetsconformal}.

\subsection{Proof of Theorem \ref{projectionthm}} \label{projectionthmproof}

Consider a system of similarities acting on $[0,1]^d$ and a transitive subshift of finite type $\Sigma_A$ satisfying the strong separation property.  Also assume that $F_A$ satisfies the minimality assumption for some $k <d$ and let $\mu$ be a Gibbs measure for $F_A$.  Theorem \ref{symbtogeomCP2} guarantees that there exists a similarity map $S$ on $[0,1]^d$ and a probability measure $\mu_0 \equiv \mu_i$ for some $i \in \mathcal{I}$, such that $ \nu = \mu_0 \circ S^{-1}$ generates an ergodic CP-chain.  Let $\varepsilon>0$ and observe that Theorem \ref{EEE} implies that there is an open dense set $\mathcal{V}_\varepsilon \subseteq \Pi_{d,k}$ such that for $\pi \in \mathcal{V}_\varepsilon$
\[
\dim_\H \pi \nu \, > \,  \min\{ k, \dim_\H \nu\} - \varepsilon.
\]
However, since $\nu = \mu_0 \circ S^{-1}$, the measures $\big(\pi O(S)^{-1}\big) \mu_0$ and $\pi \nu$ are essentially the same measure (one is equivalent to a scaled and translated copy of the other).  Therefore, if $\pi \in \mathcal{V}_\varepsilon$, then
\[
\dim_\H \big( \pi O(S)^{-1}\big) \mu_0   \, =  \, \dim_\H \pi \nu  \,  > \,  \min\{ k, \dim_\H \nu\} - \varepsilon  \, = \,  \min\{ k, \dim_\H \mu_0\}- \varepsilon .
\]
Since $O(S)^{-1}$ acts as an isometry on $\Pi_{d,k}$, this gives that $\mathcal{U}_\varepsilon :=  \mathcal{V}_\varepsilon O(S)^{-1} \subseteq \Pi_{d,k}$ is open, dense and if $\pi \in \mathcal{U}_\varepsilon$, then
\[
\dim_\H \pi \mu_0  \, > \,  \min\{ k, \dim_\H \mu_0\}- \varepsilon  \, = \,  \min\{ k, \dim_\H \mu\}- \varepsilon.
\]
Of course, we really want this estimate for the original measure $\mu$ but this can be achieved by a simple trick. Since $\mu_0$ is equivalent to $\mu_i$ and since $\Sigma_A$ is transitive,  for all $j \in \mathcal{I}$, we can find a finite word $\textbf{\emph{i}}' \in \mathcal{I}^k$, beginning with $i$ and such that $\textbf{\emph{i}}'j$ is admissible, which satisfies $S_{\textbf{\emph{i}}'}(F_A^j) \subseteq \text{supp} \mu_0$.  Crucially, when $\mu_0$ is restricted to this subset it is equivalent to $\mu_j \circ S_{\textbf{\emph{i}}'}^{-1}$.  The Besicovitch Density Point Theorem guarantees that (the normalisation of) the push forward under $S$ of this restriction of $\mu_0$ generates the same ergodic CP-chain as $\nu$.  This means that the `good set' $\mathcal{V}_\varepsilon$ also applies to $\mu_j \circ S_{\alpha_k}^{-1} \circ S^{-1}$ and, using a similar argument to above, the set $\mathcal{U}^j_\varepsilon :=  \mathcal{U}_\varepsilon O(S \circ S_{\textbf{\emph{i}}'})^{-1} \subseteq \Pi_{d,k}$  is a `good set' for the measure $\mu_j$, i.e, for $\pi \in \mathcal{U}^j_\varepsilon$,
\[
\dim_\H \pi \mu_j  \, >  \, \min\{ k, \dim_\H \mu\}- \varepsilon.
\]
Letting
\[
\mathcal{U}^{all}_\varepsilon = \bigcap_{j \in \mathcal{I}} \mathcal{U}^j_\varepsilon
\]
we obtain an open and dense set which is good for all first level measures $\mu_i$ simultaneously.  It follows that if $\pi \in \mathcal{U}^{all}_\varepsilon $, then
\[
\dim_\H \big(\pi O(S_{\alpha\vert_k}) \big) \mu_i   \, =  \,  \dim_\H \pi  \mu_i \, > \,  \min\{ k, \dim_\H \mu\}- \varepsilon
\]
for all $\alpha \in \Sigma_A$, $k \in \mathbb{N}$ and $i \in \mathcal{I}$.  The minimality condition combined with the openness of $\mathcal{U}^{all}_\varepsilon$ now implies that
\[
\dim_\H \pi\mu_i  \, > \,  \min\{ k, \dim_\H \mu\}- \varepsilon
\]
for \emph{all} $\pi \in \Pi_{d,k}$ and $i \in \mathcal{I}$.  Finally, to transfer this result to $\mu$, observe that if $E$ is such that $\pi\mu(E) >0$, then there must exist $i \in \mathcal{I}$ such that $\pi\mu_i(E) >0$ and therefore
\[
\dim_\H \pi \mu  \, \geq  \, \min_{i \in \mathcal{I}} \dim_\H \pi \mu_i  \, >  \,  \min\{ k, \dim_\H \mu\}- \varepsilon
\]
for \emph{all} $\pi \in \Pi_{d,k}$.  The result now follows since $\varepsilon>0$ was arbitrary.  The final part of the theorem, concerning the dimensions of projections of the set $F_A$, follows easily from the result concerning measures.

\vspace{8mm}

\begin{centering}

\textbf{Acknowledgements}

The majority of this research was carried out while JMF was a PDRA of MP at the University of Warwick.  JMF and MP were financially supported in part by the EPSRC grant EP/J013560/1.

\end{centering}

\end{document}